\numberwithin{equation}{section}
\theoremstyle{plain}
\newtheorem{theorem}{Theorem}[section]
\newtheorem{corollary}[theorem]{Corollary}
\newtheorem{lemma}[theorem]{Lemma}
\newtheorem{proposition}[theorem]{Proposition}
\theoremstyle{definition}
\newtheorem{definition}[theorem]{Definition}
\theoremstyle{remark}
\newtheorem{remark}[theorem]{Remark}
\newtheorem{af}[theorem]{Abstract Framework}
\newtheorem{ma}[theorem]{Main Assumptions}
\newtheorem{ada}[theorem]{Additional Assumption}
\newcommand{\sA}{\mathcal{A}}
\newcommand{\sL}{\mathcal{L}}
\newcommand{\sB}{\mathcal{B}}
\newcommand{\sM}{\mathcal{M}}
\newcommand{\sR}{\mathcal{R}}
\newcommand{\sT}{\mathcal{T}}
\newcommand{\sTtt}{(\mathcal{T}(t))_{t\ge0}}
\newcommand{\sX}{\mathcal{X}}
\newcommand{\rL}{\mathrm{L}}
\newcommand{\rM}{\mathrm{M}}
\newcommand{\rC}{\mathrm{C}}
\newcommand{\rW}{\mathrm{W}}
\newcommand{\la}{\lambda}
\renewcommand{\epsilon}{\varepsilon}
\newcommand{\ep}{\varepsilon}
\newcommand{\Am}{A_m}
\newcommand{\Ame}{A_{-1}}
\newcommand{\sBtBC}{\sB_{t}^{\textrm{BC}}}
\newcommand{\sBt}{\sB_t^{\textrm{BC}}}
\newcommand{\sBnt}{\sB_{nt}^{\textrm{BC}}}
\newcommand{\sBl}{\sB_l}
\newcommand{\sMl}{\sM_\lambda}
\newcommand{\Bl}{B_\lambda}
\newcommand{\Bm}{B_\mu}
\newcommand{\Bn}{B_\nu}
\newcommand{\LA}{B_A}
\newcommand{\Ql}{Q_\lambda}
\newcommand{\Qm}{Q_\mu}
\newcommand{\RlA}{R(\lambda,A)}
\newcommand{\RelBB}{R(\el,\BB)}
\newcommand{\RlelAA}{R(\lambda\el,\AAA)}
\newcommand{\RmA}{R(\mu,A)}
\newcommand{\RnA}{R(\nu,A)}
\newcommand{\Tme}{T_{-1}}
\newcommand{\Ttt}{(T(t))_{t\ge0}}
\newcommand{\Id}{I}
\newcommand{\Fim}{\Phi^{-}}
\newcommand{\Fiwm}{\Phi_w^{-}}
\newcommand{\Fiwp}{\Phi_w^{+}}
\newcommand{\Fwmt}{(\Fiwm)^\top}
\newcommand{\DAm}{[D(\Am)]}
\newcommand{\dX}{\partial X}
\newcommand{\FeA}{{F_1^A}}
\newcommand{\Xme}{X_{-1}}
\newcommand{\MmC}{\rM_{m}(\CC)}
\newcommand{\MnC}{\rM_{n}(\CC)}
\newcommand{\MmnC}{\rM_{m\times n}(\CC)}
\newcommand{\MnmC}{\rM_{n\times m}(\CC)}
\newcommand{\Cm}{\CC^m}
\newcommand{\Cn}{\CC^n}
\newcommand{\sRBC}{e\sR^{\textrm{BC}}}
\newcommand{\sRBCt}{e\sR_t^{\textrm{BC}}}
\newcommand{\saRBC}{a\sR^{\textrm{BC}}}
\newcommand{\saRBCt}{a\sR_t^{\textrm{BC}}}
\newcommand{\sRBCp}{e^+\sR^{\textrm{BC}}}
\newcommand{\saRBCp}{a^+\sR^{\textrm{BC}}}
\newcommand{\saRBCtp}{a^+\sR_t^{\textrm{BC}}}
\newcommand{\sRBCtp}{e^+\sR_t^{\textrm{BC}}}
\newcommand{\sRBCm}{\sR^{\textrm{BC}}_{\textrm{max}}}
\newcommand{\sRBCnt}{e\sR_{nt}^{\textrm{BC}}}
\newcommand{\sRBCntp}{e^+\sR_{nt}^{\textrm{BC}}}
\newcommand{\WepneCm}{\rW^{1,p}\bigl([0,1],\Cm\bigr)}
\newcommand{\Wkpne}{\rW^{k,p}[0,1]}
\newcommand{\Lpnl}{{\rL^p[0,l]}}
\newcommand{\Lpne}{{\rL^p[0,1]}}
\newcommand{\LpneRmp}{{\rL^p\bigl([0,1],\RR_+^m\bigr)}}
\newcommand{\Lpnep}{{\rL^p\bigl([0,1],\RR_+\bigr)}}
\newcommand{\LpntU}{{\rL^p\bigl([0,t],U\bigr)}}
\newcommand{\LpntUp}{{\rL^p\bigl([0,t],U^+\bigr)}}
\newcommand{\Lpnt}{{\rL^p[0,t]}}
\newcommand{\LpnntU}{{\rL^p\bigl([0,nt],U\bigr)}}
\newcommand{\LpneCm}{{\rL^p\bigl([0,1],\Cm\bigr)}}
\newcommand{\LelRpU}{\rL^1_{\mathrm{loc}}(\RR_+,U)}
\newcommand{\LentU}{\rL^1([0,t],U)}
\newcommand{\WzentU}{\rW^{2,1}([0,t],U)}
\newcommand{\CeRpX}{\rC^1(\RR_+,X)}
\newcommand{\AAA}{\mathbb A}
\newcommand{\BB}{\mathbb B}
\newcommand{\CC}{\mathbb C}
\newcommand{\NN}{\mathbb N}
\newcommand{\RR}{\mathbb R}
\newcommand{\eins}{\mathbbm{1}}
\newcommand{\eab}{\eins_{[\alpha,\beta]}}
\newcommand{\ela}{e^{\lambda\alpha}}
\newcommand{\elb}{e^{\lambda\beta}}
\newcommand{\els}{e^{\lambda s}}
\newcommand{\elt}{e^{\lambda t}}
\newcommand{\el}{e^{\lambda}}
\newcommand{\Rg}{\operatorname{rg}}
\newcommand{\lin}{\operatorname{span}}
\newcommand{\linq}{\overline{\operatorname{span}}\,}
\newcommand{\gb}{\operatorname{{\omega_0}}}
\newcommand{\coc}{\overline{\operatorname{co}}\,}
\newcommand{\co}{\operatorname{co}}
\newcommand{\SBC}{$\Sigma_{\textrm{BC}}(\Am,B,Q)$}
\newcommand{\ds}{\,ds}
\newcommand{\dr}{\,dr}
\newcommand{\dds}{\frac{d}{ds}}
\renewcommand{\l}{\left}
\renewcommand{\r}{\right}
\newcommand{\diag}{\operatorname{diag}}
\newcommand{\epsl}{\varepsilon_{\la}}
\renewcommand{\phi}{\varphi}
\newcommand{\etalchar}[1]{$^{#1}$}
\title[Exact and Positive Controllability]
{Exact and Positive Controllability of Boundary Control Systems}
\author{Klaus-Jochen Engel and Marjeta Kramar Fijav\v{z}}
\date{\today}
\begin{document}

\subjclass[2010]{93B05, 47N70, 35R02}%
\keywords{Boundary control, positive control, operator semigroups, reachability spaces, Kalman condition, flows in networks.}%

\begin{abstract}
Using the semigroup approach to abstract boundary control problems  we characterize the space of all \emph{exactly} reachable states.
% and describe the \emph{maximal reachability space} which gives an upper bound for this space. 
Moreover, we study the situation when the controls of the system are required to be \emph{positive}. The abstract results are applied to flows in networks with static as well as dynamic boundary conditions. 
\end{abstract}
\maketitle

\section{Introduction}
This paper is a continuation of \cite{EKNS:08, EKKNS:10} where we introduced a semigroup approach to boundary control problems and applied it to the control of flows in networks. While in these previous works we concentrated on maximal \emph{approximate} controllability, we now focus on the \emph{exact}- and \emph{positive}  controllability spaces. In particular, this will generalize and refine results given in \cite{BBEAM:13, EKNS:08, EKKNS:10} where further references to the related literature can be found.

\smallbreak
As a simple motivation, we consider as in \cite{EKNS:08} a transport process along the edges of a finite network. This system is governed through the transmission conditions in the vertices of the network which represent the ``\emph{boundary space}'' for our problem. We then like to control the behavior of this system by acting upon a single node only. In this context it is reasonable to ask the following questions.

\begin{itemize}
\item Can we reach all possible states in final time?
\item If not, can we describe the maximal possible set of reachable states?
\item Is the choice of a particular control node important?
\item Which states can be reached if only positive controls are allowed?
\end{itemize}

In Section~\ref{examples} we will address all these questions. To this end we first recall in Section~\ref{TAF} our abstract framework from \cite{EKKNS:10} as well as some basic results concerning boundary control systems. In Section~\ref{sec:EC} we then characterize boundary admissible control operators and describe the corresponding exact reachability space. In Section~\ref{Sec:pos} we turn our attention to positive boundary control systems on Banach lattices. Finally, in Section~\ref{examples} we apply our results and explicitly compute the exact (positive)  reachability spaces for three different examples of a transport equation controlled at the boundary: in $\RR^m$, in a network, and in a network with dynamic boundary conditions.

\section{The Abstract Framework}\label{TAF}
We start by recalling our general setting from \cite{EKKNS:10}.

\begin{af}\label{af-bcs}
We consider
\begin{enumerate}[(i)]
\item three Banach spaces $X$, $\dX$ and $U$, called the
\emph{state}, \emph{boundary} and \emph{control space}, resp.;
\item a closed, densely defined \emph{system operator} $\Am:D(\Am)\subseteq X\to
X$;
\item a \emph{boundary operator} $Q\in\sL(\DAm,\dX)$;
\item a \emph{control operator} $B\in\sL(U,\dX)$.
\end{enumerate}
\end{af}

For these operators and spaces and a \emph{control function}
$u\in\LelRpU$ we then consider the \emph{abstract Cauchy problem
with boundary control}\footnote{We denote by $\dot x(t)$ the derivative of $x$ with respect to the ``time'' variable $t$.}
\begin{alignat}{2}\label{ACPBC}
\begin{cases}
\dot x(t)=\Am x(t),&t\ge0,\\
Qx(t)=Bu(t), &t\ge0,\\
x(0)=x_0.
\end{cases}
\end{alignat}
A function $x(\cdot)=x(\cdot,x_0,u)\in\CeRpX$ with $x(t)\in D(\Am)$
for all $t\ge0$ satisfying \eqref{ACPBC} is called a \emph{classical
solution}. Moreover, we denote the \emph{abstract boundary control
system} associated to \eqref{ACPBC} by \SBC{}.

\smallbreak In order to investigate \eqref{ACPBC} we make the following standing assumptions which in particular ensure that the \emph{un}controlled abstract Cauchy problem, i.e., \eqref{ACPBC} with $B=0$, is well-posed.

\begin{ma}
\makeatletter
\hyper@anchor{\@currentHref}%
\makeatother
\label{ma-bcs} 
\begin{enumerate}[(i)]
\item The restricted operator $A\subset\Am$ with domain $D(A):=\ker Q$ generates
a strongly continuous semigroup $\Ttt$ on $X$;
\item the boundary
operator $Q:D(\Am)\to\dX$ is surjective.
\end{enumerate}
\end{ma}

Under these assumptions the following properties have been shown in \cite[Lem.~1.2]{Gre:87}.

\begin{lemma}\label{lem-Gre} Let Assumptions~\ref{ma-bcs} be
satisfied. Then the following assertions are true for all $\lambda,\mu\in\rho(A)$.
\begin{enumerate}[(i)]
\item $D(\Am)=D(A)\oplus\ker(\lambda-\Am)$;
\item $Q|_{\ker(\lambda-\Am)}$ is invertible and the operator
\[\Ql:=(Q|_{\ker(\lambda-\Am)})^{-1}:\dX\to\ker(\lambda-\Am)\subseteq X\]
is bounded;
%\item $\Pl:=\Ql Q\in\sL(\DAm)$ is a projection onto
%$\ker(\lambda-\Am)$ along $D(A)=\ker Q$;
\item $\RmA\Ql= %\frac{1}{\lambda-\mu}(\Qm-\Ql)=
\RlA\Qm$. %, $\lambda\ne\mu$.
\end{enumerate}
\end{lemma}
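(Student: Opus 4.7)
The plan is to prove parts (i) and (ii) simultaneously by identifying $Q|_{\ker(\lambda-\Am)}$ as a Banach space isomorphism, and then derive (iii) from an algebraic computation involving the resolvent identity.

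For the direct sum decomposition in (i), I would start with the following constructive splitting: given $x\in D(\Am)$, set $y:=\RlA(\lambda-\Am)x\in D(A)$ and $z:=x-y$. A short computation shows $(\lambda-\Am)z=(\lambda-\Am)x-(\lambda-A)y=0$, so $z\in\ker(\lambda-\Am)$. Uniqueness of the decomposition follows from $D(A)\cap\ker(\lambda-\Am)=\ker(\lambda-A)=\{0\}$, which holds since $\lambda\in\rho(A)$. The same observation gives the injectivity of $Q|_{\ker(\lambda-\Am)}$: if $z\in\ker(\lambda-\Am)$ and $Qz=0$, then $z\in D(A)\cap\ker(\lambda-\Am)=\{0\}$. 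Surjectivity of $Q|_{\ker(\lambda-\Am)}$ then combines Assumption~\ref{ma-bcs}(ii) with (i): for $\phi\in\dX$ take $x\in D(\Am)$ with $Qx=\phi$, decompose $x=y+z$ as above, and observe $Qz=Qx-Qy=\phi$.

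The boundedness of $\Ql:=(Q|_{\ker(\lambda-\Am)})^{-1}$ is the main technical step. Since $\Am$ is closed, $\ker(\lambda-\Am)$ is a closed subspace of $X$, and hence a Banach space in the $X$-norm. On $\ker(\lambda-\Am)$ the $X$-norm is equivalent to the graph norm of $\Am$ (the $\Am$-part is just multiplication by $\lambda$), and $Q$ is continuous on $\DAm$. Therefore $Q|_{\ker(\lambda-\Am)}$ is a continuous bijection between Banach spaces, so the open mapping theorem yields $\Ql\in\sL(\dX,X)$.

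For (iii), the clean way is to first derive a "resolvent-type" identity for $\Ql$ itself. For $\phi\in\dX$ the element $y:=\Ql\phi-\Qm\phi$ belongs to $D(A)$ because $Qy=\phi-\phi=0$, and a direct calculation gives $(\lambda-A)y=(\lambda-\Am)\Ql\phi-(\lambda-\Am)\Qm\phi=-(\lambda-\mu)\Qm\phi$, whence
\begin{equation*}
\Ql\phi=\Qm\phi-(\lambda-\mu)\RlA\Qm\phi.
\end{equation*}
Applying $\RmA$ and invoking the standard resolvent identity $\RmA-\RlA=(\lambda-\mu)\RmA\RlA$ makes the cross-terms cancel, leaving $\RmA\Ql\phi=\RlA\Qm\phi$ as desired. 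The only real obstacle along the way is the boundedness step, which relies on recognizing $\ker(\lambda-\Am)$ as a closed subspace of $X$ so that the open mapping theorem applies; everything else is routine algebra once the bijectivity of $Q|_{\ker(\lambda-\Am)}$ has been established.
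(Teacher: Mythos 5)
Your proof is correct and follows essentially the standard argument for this result, which the paper does not prove itself but imports from \cite[Lem.~1.2]{Gre:87}: the splitting $x=\RlA(\lambda-\Am)x+\bigl(x-\RlA(\lambda-\Am)x\bigr)$ for (i), closedness of $\ker(\lambda-\Am)$ plus the open mapping theorem for (ii), and the identity $\Ql=\bigl(\Id-(\lambda-\mu)\RlA\bigr)\Qm$ combined with the resolvent equation for (iii). Nothing to add.
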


The following operators are essential to obtain explicit
representations of the solutions of the boundary control problem \eqref{ACPBC}.

\begin{definition}
For $\lambda\in\rho(A)$ we call the operator $\Ql$, introduced in
Lemma~\ref{lem-Gre}.(ii), abstract \emph{Dirichlet operator} and define
\[\Bl:=\Ql B \in \sL\bigl(U,\ker(\lambda-\Am)\bigr)\subset\sL(U,X).\]
\end{definition}

By \cite[Prop.~2.7]{EKKNS:10} the solutions of \eqref{ACPBC} can be represented by the following extrapolated version of the variation of parameters formula.

\begin{proposition}\label{prop-vpf-ex} Let $x_0\in X$, $u\in\LelRpU$ and $\lambda\in\rho(A)$. If
$x(\cdot)=x(\cdot,x_0,u)$ is a classical solution \eqref{ACPBC}, then it
is given by the \emph{variation of parameters formula}
\begin{equation}\label{eq-vpf-1}
x(t)=T(t)x_0+(\lambda-\Ame)\int_0^t T(t-s)\Bl u(s)\ds,\quad
t\ge0.
\end{equation}
\end{proposition}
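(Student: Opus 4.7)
The plan is to pass to the Sobolev-type extrapolation space $X_{-1}$ associated with $A$, where the boundary action becomes a genuine inhomogeneity in an inhomogeneous Cauchy problem, and then to invoke the standard variation of parameters formula for that inhomogeneous ACP.

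The first key step is the algebraic decomposition $x(t) = y(t) + \Bl u(t)$ with $y(t) := x(t) - \Bl u(t)$. Using $Q\Bl = Q\Ql B = B$, the boundary condition $Qx(t) = Bu(t)$ gives $Qy(t) = 0$, so $y(t) \in D(A)$; since also $\Bl u(t) \in \ker(\lambda - \Am)$, one has $\Am \Bl u(t) = \lambda \Bl u(t)$. Substituting into $\dot x(t) = \Am x(t)$ yields $\dot x(t) = A y(t) + \lambda \Bl u(t)$ in $X$. Now I would use that $\Ame$ extends $A$ from $D(A)$ to all of $X$: since $y(t) \in D(A)$,
\[
A y(t) = \Ame y(t) = \Ame x(t) - \Ame \Bl u(t),
\]
so that, as an identity in $X_{-1}$,
\[
\dot x(t) = \Ame x(t) + (\lambda - \Ame)\Bl u(t), \qquad x(0) = x_0.
\]
The inhomogeneity $s \mapsto (\lambda - \Ame)\Bl u(s)$ lies in $\rL^1_{\mathrm{loc}}(\RR_+, X_{-1})$ since $(\lambda - \Ame)\Bl \in \sL(U, X_{-1})$ and $u \in \LelRpU$.

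Applying the standard variation of parameters formula to the generator $\Ame$ and the extrapolated semigroup $\Tmett$, and noting that $\Tme(t)x_0 = T(t)x_0$ because $x_0 \in X$, I obtain
\[
x(t) = T(t) x_0 + \int_0^t \Tme(t-s)(\lambda - \Ame)\Bl u(s)\ds.
\]
Since $(\lambda - \Ame)$ commutes with $\Tme(t-s)$ on $X$ and $\Tme(t-s)\Bl u(s) = T(t-s)\Bl u(s) \in X$, closedness of $\Ame$ permits moving $(\lambda - \Ame)$ outside the Bochner integral, which delivers precisely \eqref{eq-vpf-1}. The main obstacle is the bookkeeping in the reformulation inside $X_{-1}$: $\Am$ and $\Ame$ are two distinct extensions of $A$, and only after inserting the specific splitting $x(t) = y(t) + \Bl u(t)$ with $y(t) \in D(A)$ does the identity $\Am x(t) = \Ame x(t) + (\lambda - \Ame)\Bl u(t)$ emerge, putting the equation into the form to which the inhomogeneous variation of parameters on $X_{-1}$ applies; pulling the closed operator out of the integral at the end is then routine.
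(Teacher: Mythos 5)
Your argument is correct and is essentially the proof the paper relies on: the statement is quoted from \cite[Prop.~2.7]{EKKNS:10}, whose proof uses exactly this Greiner-type splitting $x(t)=y(t)+\Bl u(t)$ with $y(t)\in D(A)=\ker Q$, the rewriting as the inhomogeneous problem $\dot x(t)=\Ame x(t)+(\lambda-\Ame)\Bl u(t)$ in $\Xme$, and the standard variation of parameters formula for the extrapolated semigroup. All the delicate points (that $(\lambda-\Ame)\Bl u(\cdot)$ is integrable in $\Xme$, and that the closed operator $\lambda-\Ame$ may be pulled out of the Bochner integral) are handled correctly.
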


Our aim in the sequel is to investigate which states in $X$ can be \emph{exactly} reached from $x_0=0$ by solutions of \eqref{ACPBC}. To this end we have to impose an additional assumption which, by \eqref{eq-vpf-1}, ensures that solutions for $\rL^p$-controls have values in $X$.

\begin{definition}
Let $1\le p\le+\infty$.
Then the control operator $B\in\sL(U,\dX)$ is called \emph{$p$-boundary admissible} if there exist $t>0$  and $\lambda\in\rho(A)$ such that
\begin{equation}\label{b-admiss}
\int_0^{t} T(t-s)\Bl u(s)\ds\in D(A)\quad\text{for all }u\in\LpntU.
\end{equation}
\end{definition}

\begin{remark}\label{rem:Bdd-op}
From Lemma~\ref{lem-Gre}.(iii) it follows that $(\lambda-\Ame)Q_\lambda\in\sL(\dX,\Xme)$,  hence also 
\[
\LA:=(\lambda-\Ame)\Bl\in\sL(U,\Xme)
\]  
is independent of $\lambda\in\rho(A)$. Then $B\in\sL(U,\dX)$ is $p$-boundary admissible if and only if $\LA$ is $p$-admissible in the usual sense, cf. \cite[Def.~4.1]{Wei:89a}.
This implies that if \eqref{b-admiss} is satisfied for some $t>0$ then it is satisfied for every $t>0$. Moreover, we note that $B$ is $1$-boundary admissible if $\ker(\lambda-\Am)\subset\FeA$, see \cite[Lem.~A.3]{EKKNS:10}. Finally, since $\LpntU\subset\LentU$ it follows that $1$-boundary admissibility implies $p$-boundary admissibility for all $p>1$.
\end{remark}

\smallbreak
Now assume that $B\in\sL(U,\dX)$ is $p$-boundary admissible. Then for fixed $\lambda\in\rho(A)$ and $t>0$ the operators $\sBtBC:\LpntU\to X$ given by
\begin{equation}\label{bt-bc}
%\begin{aligned}
\sBtBC u:= (\lambda- A)\int_0^tT(t-s)\Bl u(s)\ds\\
=\int_0^t\Tme(t-s)\LA u(s)\ds
%\end{aligned}
\end{equation}
are called the \emph{controllability maps} of the system \SBC, where the second integral initially is taken in the extrapolation space $\Xme$. Note that by the closed graph theorem $\sBtBC\in\sL(\LpntU,X)$. Hence,  this definition is independent of the particular choice of $\lambda\in\rho(A)$ and gives the (unique) classical solution of \eqref{ACPBC} for given $u\in \WzentU$ and  $x_0=0$. This motivates the following definition.

\begin{definition} 
\begin{enumerate}[(a)]
\item
The \emph{exact reachability space in time $t\ge0$} of \SBC{} is defined by\footnote{By $\Rg(T)$ we denote the range $TX\subseteq Y$ of an operator $T:X\to Y$.}
\begin{equation}\label{eq-reach-sp-t}
\sRBCt:=\Rg(\sBtBC).
\end{equation}
Moreover, we define the \emph{exact reachability space} (in arbitrary time) by
\begin{equation}\label{eq-reach-sp}
\sRBC:=\bigcup_{t\ge0}\Rg(\sBtBC)
\end{equation}
and call \SBC{} \emph{exactly controllable} (in arbitrary time) if $\sRBC=X$.
\item
The \emph{approximate reachability space in time $t\ge0$} of \SBC{} is defined by
\begin{equation}\label{eq-app-reach-sp-t}
\saRBCt:=\overline{\sRBCt}.
\end{equation}
Moreover, we define the \emph{approximate reachability space} (in arbitrary time) by
\begin{equation}\label{eq-app-reach-sp}
\saRBC:=\overline{\;\bigcup_{t\ge0}\saRBCt\,}
\end{equation}
and call \SBC{} \emph{approximately controllable} if $\saRBC=X$.
\end{enumerate}
\end{definition}

From \cite[Thm.~2.12 \& Cor.~2.13]{EKKNS:10} we obtain the following properties and representations of the approximate reachability space.

\begin{proposition}\label{prop:aR}
Assume that $B\in\sL(U,\dX)$ is $p$-boundary admissible.
Then the following holds.
\begin{enumerate}[(i)]
\item $\saRBC$ is a closed linear subspace, invariant under $\Ttt$ and $R(\lambda,A)$ for $\lambda>\gb(A)$.
\item $\saRBC=\linq \bigcup_{\lambda>\omega} \Rg(\Bl)$ for some $\omega>\gb(A)$.
\item $\saRBC\subseteq\linq \bigcup_{\lambda>\gb(A)}\ker(\lambda-\Am)$.
\end{enumerate}
\end{proposition}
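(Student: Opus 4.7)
The proof rests on the representation $\sBtBC u=\int_0^t\Tme(t-s)\LA u(s)\ds$ from \eqref{bt-bc} together with Laplace-transform identities for $\Tmett$ and $\bigl(R(\lambda,\Ame)\bigr)_{\lambda\in\rho(A)}$, and on the fact that $R(\lambda,\Ame)(\lambda-\Ame)=I$ on $X$.

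For (i), I would first verify that $\{\sRBCt\}_{t\ge0}$ is nested: given a control on $[0,s]$, shifting it to the right by $t-s$ (and extending by zero on $[0,t-s]$) yields an element of $\LpntU$ producing the same final state, so $\sR_s^{\textrm{BC}}\subseteq\sRBCt$ for $s\le t$. Hence $\bigcup_{t\ge0}\sRBCt$ is a linear subspace and $\saRBC$ is closed by definition. Invariance under $T(s)$ follows from the identity $T(s)\sBtBC u=\sB_{t+s}^{\textrm{BC}}\widehat u$, where $\widehat u$ extends $u$ by zero on $[t,t+s]$; this is a direct consequence of the semigroup law for $\Tmett$ applied inside the defining integral. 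Invariance under $\RlA$ for $\lambda>\gb(A)$ then follows from the Hille representation $\RlA=\int_0^\infty\e^{-\lambda r}T(r)\dr$ together with closedness of $\saRBC$ and approximation by Riemann sums.

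For (ii), the inclusion $\supseteq$ would come from testing with the exponential controls $u_{t,\lambda}(s):=\e^{-\lambda(t-s)}u_0$, where $\lambda>\gb(A)$ and $u_0\in U$; substituting $r=t-s$ in \eqref{bt-bc} gives
\[
\sBtBC u_{t,\lambda}=\int_0^t\e^{-\lambda r}\Tme(r)\LA u_0\dr,
\]
which converges as $t\to\infty$ to $R(\lambda,\Ame)\LA u_0=R(\lambda,\Ame)(\lambda-\Ame)\Bl u_0=\Bl u_0$, placing $\Rg(\Bl)$ inside $\saRBC$ for every $\lambda>\omega:=\gb(A)$. For the reverse inclusion I would use Hahn--Banach: take $\phi\in X^*$ annihilating $\Rg(\Bl)$ for every $\lambda>\omega$; the previous computation then says $\phi\bigl(R(\lambda,\Ame)\LA u_0\bigr)=0$ for all such $\lambda$, so by uniqueness of the Laplace transform the scalar function $r\mapsto\phi\bigl(\Tme(r)\LA u_0\bigr)$ vanishes identically, and a Fubini argument yields $\phi(\sBtBC u)=0$ for every $u\in\LpntU$.

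Assertion (iii) is immediate from $\Rg(\Bl)\subseteq\ker(\lambda-\Am)$ combined with~(ii). The main obstacle is the duality step in (ii): one has to perform the Laplace-transform and Fubini manipulations with integrals that a priori live in the extrapolation space $\Xme$ rather than in $X$, and verify that $\phi\in X^*$ pairs correctly with them. The cleanest route is to first restrict to sufficiently regular controls (for instance $\WzentU$, where \eqref{ACPBC} has a genuine classical solution) and then pass to general $\rL^p$-controls by density, using $p$-boundary admissibility and the closed graph theorem to keep everything bounded in $X$.
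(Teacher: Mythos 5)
Your plan is sound, and it is worth noting that the paper itself offers no proof of this proposition — it is imported verbatim from \cite[Thm.~2.12 \& Cor.~2.13]{EKKNS:10} — so there is no in-paper argument to match against. Parts (i) and (iii) and the inclusion ``$\supseteq$'' in (ii) work as you describe; for the latter you should record that the convergence $\sBtBC u_{t,\lambda}\to\Bl u_0$ takes place in the norm of $X$ (not merely in $\Xme$), which follows from the closed-form identity $\sBtBC u_{t,\lambda}=\bigl(\Id-e^{-\lambda t}T(t)\bigr)\Bl u_0$ together with $e^{-\lambda t}\|T(t)\|\to0$ for $\lambda>\gb(A)$. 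The one step that genuinely needs repair is the one you flag yourself: for $\phi\in X'$ the scalar function $r\mapsto\langle\Tme(r)\LA u_0,\phi\rangle$ is not defined, since $\Tme(r)\LA u_0$ lives only in $\Xme$ and $\phi$ does not extend there. The clean fix is to apply uniqueness of the Laplace transform one integration higher: set $F(r):=\int_0^r\Tme(\sigma)\LA u_0\,d\sigma=\sB_r^{\textrm{BC}}(\eins_{[0,r]}\otimes u_0)$, which lies in $X$ precisely because $B$ is $p$-boundary admissible; its Laplace transform is $\tfrac1\lambda\Bl u_0$, so $r\mapsto\langle F(r),\phi\rangle$ is a continuous scalar function whose Laplace transform vanishes on $(\omega,\infty)$, whence $\langle F(r),\phi\rangle=0$ for all $r\ge0$. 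Since for a step control $u=\eab\otimes v$ one has $\sBtBC u=F(t-\alpha)-F(t-\beta)$ (with $F$ built from $v$), density of step functions in $\LpntU$ and boundedness of $\sBtBC$ give $\langle\sBtBC u,\phi\rangle=0$ for all $u\in\LpntU$, completing the reverse inclusion; no Fubini manipulation in $\Xme$ and no detour through $\WzentU$ is then required.
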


Part~(iii) shows that there is an upper bound for the reachability space
depending on the eigenvectors of $\Am$ only, independent of the
control operator $B$. This justifies the following notion.

\begin{definition}\label{RBCmax}
The \emph{maximal reachability space} of \SBC{} is defined by
\begin{equation*}%\label{def-RBCmax}
\sRBCm:=\linq\bigcup_{\lambda>\gb(A)} \ker
(\lambda-A_m).
\end{equation*}
The system \SBC{} is called \emph{maximally controllable} if $\sRBC=\sRBCm$.
\end{definition}

We stress that $\sRBCm\ne X$ may happen (some basic examples are provided in \cite[Sec.~5]{EKNS:08}), hence the relevant question about exact or approximate
controllability is indeed to compare $\sRBC$ or $\saRBC$ to the space
$\sRBCm$ and not to the whole space $X$, as it is usually
done in the classical situation.

After this short summary on boundary control systems  \SBC{} taken mainly from \cite{EKKNS:10} in the context of approximate controllability, we now turn our attention to the case of \emph{exact} controllability. 
%To this aim we need to describe the range of the controllability maps $\sBtBC$. 

\section{Exact controllability}
\label{sec:EC}

%
%We are mainly interested in the control of flows in a network. For this reason from now on we make the following
%\begin{ada}\label{add-assum}
%\begin{itemize}
%%  \item [(i)] The control space is one dimensional, i.e., $U=\CC$;
%  \item [(i)] The state space $X$ is a closed subspace of $\LpneY$ for some Banach space $Y$.
%\end{itemize}
%\end{ada}
%
%This implies in particular that $\LpntU=\Lpnt$.
%Moreover, we can identify $\Bl=\Ql B\in\sL(\CC,X)\simeq X$ with a single element in $X$.

\smallbreak

We start this section by giving two characterizations of $p$-boundary admissibility for a control operator $B$ which frequently simplifies the explicit computation of the associated controllability map $\sBt$.
Here for $\lambda\in\CC$ we introduce the function $\epsl:\RR\to\CC$ by $\epsl(s):=\els$.
Moreover, for $f\in\Lpnt$ and $u\in U$ we define
\[
f\otimes u\in\LpntU
\quad\text{by}\quad
(f\otimes u)(s):=f(s)\cdot u.
\]
Finally, we denote by $\eab$ the characteristic function of the interval $[\alpha,\beta]\subset[0,t]$.

\begin{proposition}\label{prop:range-B1}
For a control operator $B\in\sL(U,\dX)$ the following are equivalent.
\begin{enumerate}[(a)]
\item $B$ is $p$-boundary admissible.
\item There exist $\lambda\in\rho(A)$, $t>0$ and $M\in\sL\bigl(\LpntU,X\bigr)$ such that for all $0\le\alpha\le\beta\le t$ and $v\in U$
\begin{equation}\label{eq:strange1}
\bigl(\elb\,T(t-\beta)-\ela\,T(t-\alpha)\bigr)\Bl v=
M(\epsl\cdot\eab\otimes v).
\end{equation}
\item There exist $t>0$, $\lambda_0>\gb(A)$ and $M\in\sL\bigl(\LpntU,X\bigr)$ such that for all $\lambda\ge\lambda_0$ and $v\in U$
\begin{equation}\label{eq:strange2}
\bigl(\elt-T(t)\bigr)\Bl v=
M(\epsl\otimes v).
\end{equation}
\end{enumerate}
Moreover, in this case the controllability map is given by $\sBt=M$.
\end{proposition}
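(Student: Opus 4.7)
The plan is to reduce both (b) and (c) to a single identity in the extrapolation space $\Xme$. Recall from Remark~\ref{rem:Bdd-op} that the extrapolated integral $u\mapsto\int_0^t\Tme(t-s)\LA u(s)\ds$ is always a bounded operator $\LpntU\to\Xme$, and that $p$-boundary admissibility of $B$ amounts exactly to this map taking values in $X$ (in which case it coincides with $\sBt$). The strategy is therefore to produce an explicit formula for this extrapolated integral on a rich family of ``exponential'' test functions, and then use density arguments to transfer boundedness into $X$ (equivalently, admissibility of $B$) back and forth with the hypotheses (b) and (c).

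The key identity I would establish first is
\[
\bigl(\elb\,T(t-\beta)-\ela\,T(t-\alpha)\bigr)\Bl v
=\int_\alpha^\beta\els\,\Tme(t-s)\LA v\ds
\]
in $\Xme$, for every $\lambda\in\rho(A)$, $0\le\alpha\le\beta\le t$ and $v\in U$; note that the right-hand side is exactly the extrapolated integral applied to $\epsl\cdot\eab\otimes v$. I would prove this by differentiating $s\mapsto\els\,\Tme(t-s)\Bl v$ in $\Xme$. The subtle point is the value of $\Ame\Bl v$: although $\Am\Bl v=\lambda\Bl v$, since $\Bl v\notin D(A)$ in general one cannot simply replace $\Ame$ by $\Am$. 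Using the decomposition from Lemma~\ref{lem-Gre} together with $(\lambda-\Ame)\Bl=\LA$ from Remark~\ref{rem:Bdd-op}, one computes $\Ame\Bl v=\lambda\Bl v-\LA v$, whence the $s$-derivative telescopes to $\els\,\Tme(t-s)\LA v$, and integration yields the identity.

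Given the key identity the implications are essentially formal. For (a)$\Rightarrow$(b) I would take $M:=\sBt$, whereupon the identity is exactly~\eqref{eq:strange1}. For (a)$\Rightarrow$(c) I would specialize to $\alpha=0$, $\beta=t$. For (b)$\Rightarrow$(a), the hypothesis combined with the identity shows that $M$ and the extrapolated integral agree in $\Xme$ on every $\epsl\cdot\eab\otimes v$; since the linear span of such functions is dense in $\LpntU$ (multiplying by $e^{-\lambda s}$ reduces the question to the density of step functions), and both operators are bounded, they agree on all of $\LpntU$. This forces the extrapolated integral to take values in $X$, which is precisely $p$-boundary admissibility of $B$, and yields $\sBt=M$. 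For (c)$\Rightarrow$(a) the same density argument applies, now based on density of $\lin\{\epsl\otimes v:\lambda\ge\lambda_0,\,v\in U\}$ in $\LpntU$: if $f$ in the dual annihilates all $\epsl$ with $\lambda\ge\lambda_0$, the entire function $\lambda\mapsto\int_0^t\els f(s)\ds$ vanishes on $[\lambda_0,\infty)$, hence identically by analyticity, forcing $f\equiv 0$ by injectivity of the Laplace transform.

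The main obstacle is the correct handling of $\Ame$ on $\Bl v\in D(\Am)\setminus D(A)$. Naively replacing $\Ame$ by $\Am$ in the derivative computation would make it vanish and lead to the absurd conclusion $T(t)\Bl v=\elt\Bl v$; the corrective term $-\LA v$ coming from Remark~\ref{rem:Bdd-op} is precisely what reconciles the computation and produces the nontrivial right-hand sides in (b) and~(c).
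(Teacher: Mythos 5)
Your proposal is correct and follows essentially the same route as the paper: both hinge on the identity expressing $\bigl(\elb\,T(t-\beta)-\ela\,T(t-\alpha)\bigr)\Bl v$ as the (extrapolated) controllability map applied to $\epsl\cdot\eab\otimes v$, followed by the same density-plus-boundedness argument (step functions for (b), totality of exponentials for (c)). The only cosmetic difference is that you derive this identity in $\Xme$ by differentiating $s\mapsto\els\,\Tme(t-s)\Bl v$ (correctly using $\Ame\Bl v=\lambda\Bl v-\LA v$), whereas the paper obtains the equivalent statement in $X$ by a change of variables and the resolvent integral formula.
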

%\textcolor{magenta}{\"{u}berlegen mit $\Bl$ \& und $e^{\mu\beta/\alpha}$???}

%allgemein:
%\newpage
%\begin{proposition}\label{prop:range-B1} A control operator $B\in\sL(U,\Xme^A)$ is %$p$-admissible if and only if  there exist $\lambda\in\rho(A)$, $t>0$ and %$M\in\sL\bigl(\LpntU,X\bigr)$ such that for all $0\le\alpha\le\beta\le t$ and $v\in U$
%\begin{equation}\label{eq:strange}
%\bigl(\elb\,T(t-\beta)-\ela\,T(t-\alpha)\bigr)R(\lambda,\Ame) B v=
%M(\epsl\cdot\eab\otimes v).
%\end{equation}
%Moreover, in this case the controllability map is given by $\sBt=M$.
%\end{proposition}

%alt:
%\begin{proposition}\label{prop:range-B1}
%Let $\lambda\in\rho(A)$. If there exist operators $\Ml:[0,1]\to \sL(U,Y)$ and $M\in\sL\bigl(\LpneU,X\bigr)$ such that for all $0\le\alpha\le1$ and $v\in U$
%\begin{equation}\label{eq:strange}
%\bigl(\ela\cdot T(1-\alpha)\Bl v\bigr)(s)=
%\begin{cases}
%\Ml(s)\,v&\text{if }0\le s<\alpha,\\
%\Ml(s)\,v-\bigl(M(\epsl\otimes v)\bigr)(s)&\text{if }\alpha\le s\le1,
%\end{cases}
%\end{equation}
%then  $B$ is $p$-boundary admissible and the controllability map is given by $\sBe=M$.% is given by
%\begin{equation}\label{eq:B1}
%\sBe u=Mu
%\quad\text{for all }u\in\LpneU.
%\end{equation}
%\end{proposition}

\begin{proof}  Let $u=\epsl\cdot\eab\otimes v$ for some $\lambda\in\rho(A)$, $0\le\alpha\le\beta\le t$ and $v\in U$. Then
\begin{align}
\int_0^tT(t-s)\Bl u(s)\ds\notag
&=\elt\int_\alpha^\beta e^{-\lambda(t-s)}\, T(t-s)\Bl v\ds\\\notag
&=\elt\int_{t-\beta}^{t-\alpha} e^{-\lambda s}\, T(s)\Bl v\ds\\\label{eq:B-zul0}
%&=\RlA\cdot M(\epsl\cdot\eab\otimes v)\\
%&=\el\bigl[e^{-\lambda s}\,T(s)\Bl v\bigr]_{1-\alpha}^{1-\beta}\\\notag
&=\RlA\cdot\bigl(\elb\,T(t-\beta)-\ela\,T(t-\alpha)\bigr)\Bl v.
%&&=\RlA\cdot Mu\label{eq:B-zul}
\end{align}

(a)$\Rightarrow$(b). Since by assumption $B$ is $p$-boundary admissible we have $\sBtBC\in\sL(\LpntU,X)$. Hence, \eqref{bt-bc} and \eqref{eq:B-zul0} imply \eqref{eq:strange1} for $M=\sBt$.

\smallbreak
(b)$\Rightarrow$(a). We start by proving \eqref{b-admiss}. The idea is to show this first for functions of the type $u=\epsl\cdot\eab\otimes v$. Then by linearity it also holds for linear combinations of such functions and a density argument implies \eqref{b-admiss} for arbitrary $u\in\LpntU$. To this end let $u=\epsl\cdot\eab\otimes v$ for $[\alpha,\beta]\subset[0,t]$ and $v\in U$. Then \eqref{eq:strange1} and \eqref{eq:B-zul0} imply
\begin{align}
\int_0^tT(t-s)\Bl u(s)\ds\notag
%&=\elt\int_\alpha^\beta e^{-\lambda(t-s)}\, T(t-s)\Bl v\ds\\\notag
%&=\elt\int_{t-\alpha}^{t-\beta} e^{-\lambda s}\, T(s)\Bl v\ds\\\notag
%&=\el\bigl[e^{-\lambda s}\,T(s)\Bl v\bigr]_{1-\alpha}^{1-\beta}\\\notag
%&=\RlA\cdot\bigl(\elb\,T(t-\beta)-\ela\,T(t-\alpha)\bigr)\Bl v\\\notag
&=\RlA\cdot M(\epsl\cdot\eab\otimes v)\\
&=\RlA\cdot Mu\label{eq:B-zul}.
\end{align}
Note that the multiplication operator $\sMl\in\sL\bigl(\LpntU\bigr)$ defined by $\sMl u:=\epsl\cdot u$ is an isomorphism (with bounded inverse $\sM_{-\lambda}$). Hence, it maps dense sets of $\LpntU$ into dense sets. Since the step functions are dense in $\LpntU$ (see \cite[p.14]{ABHN:01}), the linear combinations of functions of the type $\epsl\cdot\eab\otimes v$ for $[\alpha,\beta]\subset[0,t]$ and $v\in U$ form a dense subspace of $\LpntU$. Thus, we conclude that \eqref{eq:B-zul} holds for all $u\in\LpntU$. Clearly this implies that $B$ is $p$-boundary admissible and $\sBt=M$.

\smallbreak
Recall that $\LA=(\lambda-\Ame)\Bl$ is independent of $\lambda\in\rho(A)$. Hence, the equivalence (a)$\Leftrightarrow$(c) follows by similar arguments replacing the total set $\{\epsl\cdot\eab\otimes v:0\le\alpha<\beta\le t, v\in U\}$ by the set $\{\epsl\otimes v:\lambda\ge\lambda_0, v\in U\}$ which by the Stone--Weierstra{\ss} theorem is total as well in $\LpntU$ for all $\lambda_0>\gb(A)$.
\end{proof}

We note that by linearity it would suffice that Part~(b) of Proposition~\ref{prop:range-B1} is satisfied for $\alpha=0$ and all $0\le\beta\le t$ (or for all $0\le\alpha\le t$ and $\beta=t$).

\begin{corollary}\label{cor:Bn}
Let\footnote{We use the notation $\NN_l:=\{l,l+1,l+2,\ldots\}$ for the set of natural numbers starting at $l\in\NN$.} $n\in\NN_1$ and assume that $B$ is $p$-boundary admissible.
%\eqref{eq:strange1} or \eqref{eq:strange2} is satisfied. 
Then for all $u\in\LpnntU$
\begin{equation}\label{eq:sBnt}
\sBnt u=\sum_{k=0}^{n-1}T(t)^k M u_{k}
\end{equation}
where $u_k\in\LpntU$ is defined by
\begin{equation}\label{eq:def-uk}
u_{k}(s)=u\bigl((n-k-1)t+s\bigr)
\end{equation}
and $M\in\sL\bigl(\LpntU,X\bigr)$ is the operator from Proposition~\ref{prop:range-B1}. 
\end{corollary}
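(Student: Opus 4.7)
The plan is to start from the integral representation of the controllability map on the extrapolation space, split the time interval $[0,nt]$ into $n$ pieces of length $t$, and then use the semigroup law to pull out powers of $T(t)$.

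More concretely, I would use the second form of \eqref{bt-bc}, namely
\[
\sBnt u = \int_0^{nt} \Tme(nt-s)\,\LA u(s)\ds,
\]
and decompose this integral as $\sum_{j=0}^{n-1} \int_{jt}^{(j+1)t}$. On the $j$-th piece I would substitute $s = jt+r$ with $r\in[0,t]$, obtaining
\[
\int_{jt}^{(j+1)t} \Tme(nt-s)\,\LA u(s)\ds = \int_0^{t} \Tme\bigl((n-j-1)t + (t-r)\bigr)\,\LA u(jt+r)\dr.
\]
Since $(n-j-1)t\ge0$, the semigroup law in the extrapolation space yields $\Tme((n-j-1)t + (t-r)) = T(t)^{n-j-1}\Tme(t-r)$, and pulling the bounded operator $T(t)^{n-j-1}$ outside the integral gives
\[
\int_{jt}^{(j+1)t} \Tme(nt-s)\,\LA u(s)\ds = T(t)^{n-j-1}\int_0^{t}\Tme(t-r)\,\LA v_j(r)\dr,
\]
where $v_j\in\LpntU$ is defined by $v_j(r):=u(jt+r)$.

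The inner integral is precisely $\sBtBC v_j$, which by Proposition~\ref{prop:range-B1} equals $Mv_j$. Summing over $j$ and re-indexing with $k:=n-j-1$ (so that $j=n-k-1$ and $k$ runs over $0,1,\ldots,n-1$) turns $v_j(r)=u((n-k-1)t+r)$ into $u_k(r)$, which produces exactly \eqref{eq:sBnt}. The only mildly delicate point is justifying the factorization $\Tme(a+b)=T(a)\Tme(b)$ for $a\ge0$ and the interchange of $T(t)^{n-j-1}$ with the (Bochner) integral over $\Xme$; this follows because $T(t)^{n-j-1}\in\sL(\Xme)$ restricts to $\sL(X)$ and commutes with integration, so each term actually lives in $X$ rather than only in $\Xme$. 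No genuine obstacle arises since $p$-boundary admissibility of $B$ guarantees that both sides of \eqref{eq:sBnt} are already in $X$.
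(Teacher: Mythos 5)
Your argument is correct and is essentially the paper's own proof: both split $[0,nt]$ into $n$ subintervals of length $t$, apply the semigroup law to factor out $T(t)^{k}$, and identify the remaining integral as $\sBt u_k=Mu_k$. The only cosmetic difference is that you work with the extrapolated form $\int_0^{nt}\Tme(nt-s)\LA u(s)\ds$ while the paper uses the equivalent expression $(\lambda-A)\int_0^{nt}T(nt-s)\Bl u(s)\ds$ and commutes $(\lambda-A)$ past $T((n-k)t)$; your justification for moving $T(t)^{n-j-1}$ through the Bochner integral in $\Xme$ is the right point to flag and is handled adequately.
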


\begin{proof} Let $u\in\LpnntU$. Then  by \eqref{bt-bc}
\begin{align*}
\sBnt u
&=(\lambda-A)\int_0^{nt}T(nt-s)\Bl\, u(s)\ds\\
&=(\lambda-A)\sum_{k=1}^n T\bigl((n-k)t\bigr)\int_{(k-1)t}^{kt} T(kt-s)\Bl\, u(s)\ds\\
&=\sum_{k=1}^n T\bigl((n-k)t\bigr)\cdot(\lambda-A)\int_{0}^t T(t-s)\Bl\, u_{n-k}(s)\ds\\
&=\sum_{k=0}^{n-1} T(t)^k\, \sBt u_{k}.
\qedhere
\end{align*}
\end{proof}

In Section \ref{examples} we will see that \eqref{eq:strange1}, \eqref{eq:strange2}, and  \eqref{eq:sBnt} allow us to easily compute the controllability map in the situations studied in \cite[Sect.~4]{EKNS:08} and \cite[Sect.~3]{EKKNS:10} dealing with the control of flows in networks.

\begin{corollary}\label{cor:Reach}
If $B$ is $p$-boundary admissible,
% \eqref{eq:strange1} or \eqref{eq:strange2} is satisfied 
 then the exact reachability space in time $nt$ for $n\in\NN_1$ is given by
\begin{equation*}
\sRBCnt=\l\{\sum_{k=0}^{n-1}T(t)^k M u_k : u_k\in\LpntU,\; 1\le k\le n-1 \r\},
\end{equation*}
where $M\in\sL\bigl(\LpntU,X\bigr)$ is the operator from Proposition~\ref{prop:range-B1}.
\end{corollary}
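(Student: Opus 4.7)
The plan is to prove the claimed set equality by establishing both inclusions, each as a direct consequence of Corollary~\ref{cor:Bn} combined with the definition $\sRBCnt := \Rg(\sBnt)$ given in \eqref{eq-reach-sp-t}.

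For the inclusion ``$\subseteq$'', I would take $x\in\sRBCnt$, pick $u\in\LpnntU$ with $x=\sBnt u$, and then invoke \eqref{eq:sBnt}--\eqref{eq:def-uk} to write
\[
x=\sum_{k=0}^{n-1}T(t)^k M u_k,
\]
where each $u_k$, defined by $u_k(s):=u\bigl((n-k-1)t+s\bigr)$, is clearly an element of $\LpntU$. Hence $x$ lies in the right-hand side.

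For the reverse inclusion ``$\supseteq$'', the task reduces to showing that every tuple $(u_0,\ldots,u_{n-1})\in(\LpntU)^n$ arises from some $u\in\LpnntU$ via \eqref{eq:def-uk}. This is a plain concatenation argument: given arbitrary $u_0,\ldots,u_{n-1}\in\LpntU$, I define $u:[0,nt]\to U$ piecewise by
\[
u(r):=u_{n-j-1}(r-jt)\qquad\text{for } r\in[jt,(j+1)t),\ j=0,1,\ldots,n-1.
\]
Since each piece is $L^p$ on an interval of length $t$, the concatenated function lies in $\LpnntU$, and the substitution $j=n-k-1$ shows that the functions $u_k$ recovered from $u$ via \eqref{eq:def-uk} coincide with the originally prescribed ones. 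Corollary~\ref{cor:Bn} then yields $\sum_{k=0}^{n-1}T(t)^k M u_k=\sBnt u\in\Rg(\sBnt)=\sRBCnt$.

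I do not expect any genuine obstacle: the statement is essentially a reformulation of Corollary~\ref{cor:Bn} as an explicit description of $\Rg(\sBnt)$, and the only content beyond that corollary is the elementary observation that the ``piece-extraction'' map $\LpnntU\to(\LpntU)^n$ defined by \eqref{eq:def-uk} is surjective, which the concatenation formula above makes manifest.
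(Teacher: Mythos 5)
Your proof is correct and follows exactly the route the paper intends: the paper states this corollary without proof as an immediate consequence of Corollary~\ref{cor:Bn}, and your two inclusions (the forward one from \eqref{eq:sBnt}, the reverse one from the surjectivity of the piece-extraction map via concatenation) supply precisely the details left implicit. Note only that the index range ``$1\le k\le n-1$'' in the statement is evidently a typo for ``$0\le k\le n-1$'', which is the reading your argument (correctly) adopts.
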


\section{Positive controllability}\label{Sec:pos}

In this section we are interested in positive control functions yielding positive states. To this end we will make the following

\begin{ada}
%\begin{enumerate}[(i)]
%\item 
The spaces
$X$ %, $\dX$ 
and $U$ are Banach lattices.
%\end{enumerate}
\end{ada}
Moreover, by $Y^+:=\{y\in Y:Y\ge0\}$ we denote the positive cone in a Banach lattice $Y$.

\smallbreak
Note that in the sequel we do \emph{not} make any positivity assumptions on $\Ttt$, $B$ or $\Ql$ if not stated otherwise.

\begin{definition}
\begin{enumerate}[(a)]
\item The \emph{exact positive reachability space in time $t\ge0$} of system \SBC{} is defined by
\begin{equation}\label{eq-reach-sp-t+}
\sRBCtp:=\Bigl\{\sBtBC u:u\in\LpntUp\Bigr\}.
\end{equation}
Moreover, we define the \emph{exact positive reachability space} (in arbitrary time) by
\begin{equation}\label{eq-reach-sp+}
\sRBCp:=\bigcup_{t\ge0}\sRBCtp
\end{equation}
and call \SBC{}  \emph{exactly positive controllable} (in arbitrary time) if \\$\sRBCp=X^+$.
\smallbreak
\item The \emph{approximate positive reachability space in time $t\ge0$}  of \SBC{} is defined by
\begin{equation}\label{eq-a-reach-sp+}
\saRBCtp:=\overline{\sRBCtp}.
\end{equation}
Moreover, we define the \emph{approximate positive reachability space} (in arbitrary time) by
\begin{equation}\label{eq-app-reach-sp+}
\saRBCp:=\overline{\;\bigcup_{t\ge0}\saRBCtp\,}
\end{equation}
and call \SBC{}  \emph{approximately positive controllable} if $\saRBCp=X^+$.
\end{enumerate}
\end{definition}

%Assume that $B\in\sL(U,\dX)$ is $p$-boundary admissible. Since $X^+$ is closed, we have
%\begin{equation}\label{eq:pos-R-1}
%\sRBCtp\subset X^+ \iff \saRBCtp\subset X^+.
%\end{equation}
%
%
%Classical density arguments (like Stone-Weierstrass theorem) are not applicable in the case of positive functions. We will make use of the following lemma, instead. We use the notation $\coc M$ to indicate the closed convex hull of a set $M\subset X$.
%
%\begin{lemma}\label{lem:coLpp} Let $t\ge 0$. For the family of functions $\varphi_{n,k}:[0,t]\to \RR^+$, $\varphi_{n,k}(s):=e^{-ns} s^k$, it holds
%\[\coc\left\{ \varphi_{n,k} \colon (n,k)\in\NN^2\right\} = \Lpntp.\]
%\end{lemma}
%
%\begin{proof}
%This result is stated in \cite[Lemma 2.3]{BBEAM:13} for the space $\Lenep$, but the proof can be easily adapted for the spaces $\Lpntp$.
%\end{proof}
%
%We can now describe approximate positive reachability space in many ways.
First we give necessary and sufficient conditions implying that starting from the initial state $x_0=0$ positive controls result in positive states.

%\newpage
\begin{proposition}\label{prop:reach-pos}
 Assume that $B\in\sL(U,\dX)$ is $p$-boundary admissible. Then
\begin{equation}\label{eq:pos-R-1}
\sRBCtp\subset X^+
\end{equation}
if and only if
\begin{equation}\label{eq:pos-R-1.5}
\saRBCtp\subset X^+
\end{equation}
if and only if there exists $\lambda\in\RR\cap\rho(A)$ such that
\begin{equation}\label{eq:pos-R-2}
\bigl(\elb\,T(t-\beta)-\ela\,T(t-\alpha)\bigr)\Bl\ge0
\qquad\text{for all $0\le\alpha\le\beta\le t$.}
\end{equation}
Moreover, if $\Ttt$ is positive, then the above assertions are satisfied if and only if 
\begin{equation}\label{eq:pos-R-1x}
\sRBCp\subset X^+
\end{equation}
if and only if
\begin{equation}\label{eq:pos-R-1.5x}
\saRBCp\subset X^+
\end{equation}
if and only if there exists $\lambda>\gb(A)$ and $t>0$ such that
\begin{equation}\label{eq:pos-R-3}
\bigl(\els-T(s)\bigr)\Bl\ge0
\qquad\text{for all $0\le s\le t$}
\end{equation}
if and only if there exists  $\lambda_0>\gb(A)$ such that
\begin{equation}\label{eq:pos-R-4}
\Bl\ge0
\qquad\text{for all $\lambda\ge\lambda_0$}.
\end{equation}
\end{proposition}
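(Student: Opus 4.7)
My plan is to split the proof into the three equivalences \eqref{eq:pos-R-1}$\Leftrightarrow$\eqref{eq:pos-R-1.5}$\Leftrightarrow$\eqref{eq:pos-R-2} of the first block---which do not use positivity of $\Ttt$---and the longer ring of equivalences under the positivity assumption. The first block is short: \eqref{eq:pos-R-1}$\Leftrightarrow$\eqref{eq:pos-R-1.5} is immediate from closedness of $X^+$; for \eqref{eq:pos-R-1}$\Rightarrow$\eqref{eq:pos-R-2} I take $\la\in\RR\cap\rho(A)$, $v\in U^+$ and apply \eqref{eq:strange1} to the positive control $u=\epsl\cdot\eab\otimes v\in\LpntUp$, so that $\sBt u\in X^+$ is identified with the left-hand side of \eqref{eq:pos-R-2} acting on $v$. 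The converse uses that for real $\la$ the multiplication operator $\sMl u:=\epsl u$ is a positive lattice isomorphism of $\LpntU$, so that positive linear combinations of functions $\epsl\cdot\eab\otimes v$ with $v\in U^+$ are dense in $\LpntUp$; continuity of $\sBt$ then propagates the positivity, which by \eqref{eq:strange1} is \eqref{eq:pos-R-2}, from this dense set to all of $\LpntUp$.

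Under positivity of $\Ttt$ the main task is to establish the ring \eqref{eq:pos-R-2}$\Rightarrow$\eqref{eq:pos-R-4}$\Rightarrow$\eqref{eq:pos-R-3}$\Rightarrow$\eqref{eq:pos-R-2}. Taking $\beta=t$, $\alpha=t-s$ in \eqref{eq:pos-R-2} and dividing out $e^{\la(t-s)}>0$ yields $(\els-T(s))\Bl\geq 0$ on $[0,t]$. Positivity of $\Ttt$ iterates this bound to all $s\geq 0$ via $T(r+s)\Bl v=T(r)T(s)\Bl v\leq T(r)\els\Bl v=\els T(r)\Bl v\leq e^{\la(r+s)}\Bl v$ for $v\in U^+$. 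The identity $\Bm-\Bl=(\la-\mu)\RmA\Bl$---a direct consequence of Lemma~\ref{lem-Gre}.(iii)---combined with the Laplace representation $\RmA=\int_0^\infty e^{-\mu s}T(s)\ds$ for $\mu>\gb(A)$ then gives
\[
\Bm v=(\mu-\la)\int_0^\infty e^{-\mu s}\bigl(\els\Bl v-T(s)\Bl v\bigr)\ds\geq 0
\qquad\text{for }\mu>\max\{\la,\gb(A)\},
\]
which is \eqref{eq:pos-R-4}. The implication \eqref{eq:pos-R-3}$\Rightarrow$\eqref{eq:pos-R-2} (for the same $\la,t$) is the one-line factorization $(\elb T(t-\beta)-\ela T(t-\alpha))\Bl=e^{\la\alpha}T(t-\beta)\bigl(e^{\la(\beta-\alpha)}-T(\beta-\alpha)\bigr)\Bl$, nonnegative by positivity of $\Ttt$ and \eqref{eq:pos-R-3}.

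The step I expect to be the main obstacle is closing the ring via \eqref{eq:pos-R-4}$\Rightarrow$\eqref{eq:pos-R-3}, which requires inverting a vector-valued Laplace transform. My plan is to fix $\la\geq\la_0$ and $v\in U^+$ and consider the strongly continuous map $g\colon[0,\infty)\to X$, $g(s):=\els\Bl v-T(s)\Bl v$; the calculation in the previous step shows its Laplace transform is $\hat g(\mu)=(\mu-\la)^{-1}\Bm v$ for $\mu>\la$. Using $\tfrac{d}{d\mu}\Bm=-\RmA\Bm$ (again from Lemma~\ref{lem-Gre}.(iii)) together with Leibniz' rule, an induction gives
\[
(-1)^n\hat g^{(n)}(\mu)=n!\sum_{k=0}^n(\mu-\la)^{-(n+1-k)}\RmA^k\Bm v,
\]
which lies in $X^+$ thanks to \eqref{eq:pos-R-4}, positivity of $\RmA$ for $\mu>\gb(A)$, and $(\mu-\la)^{-1}>0$. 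Hence $\hat g$ is completely monotone, and the Post--Widder inversion formula (see \cite[Thm.~1.7.7]{ABHN:01}) combined with closedness of $X^+$ gives $g(s)\geq 0$ for every $s\geq 0$, i.e.\ \eqref{eq:pos-R-3}. Finally, since \eqref{eq:pos-R-4} carries no dependence on $t$, tracing the ring forwards yields \eqref{eq:pos-R-2}---and, by the first block, \eqref{eq:pos-R-1}---for \emph{every} $t>0$, which is precisely \eqref{eq:pos-R-1x}; the equivalence with \eqref{eq:pos-R-1.5x} is again closedness of $X^+$, and the reverse implications are trivial.
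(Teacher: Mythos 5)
Your proof is correct. The first block (the equivalence of \eqref{eq:pos-R-1}, \eqref{eq:pos-R-1.5} and \eqref{eq:pos-R-2} via closedness of $X^+$, the identity \eqref{eq:strange1} applied to $u=\epsl\cdot\eab\otimes v$, and density of the positive step functions under the lattice isomorphism $\sMl$) and the closing factorization \eqref{eq:pos-R-3}$\Rightarrow$\eqref{eq:pos-R-2} coincide with the paper's argument. Where you genuinely diverge is the central equivalence \eqref{eq:pos-R-3}$\Leftrightarrow$\eqref{eq:pos-R-4}: the paper forms the one-sided coupled operator matrix $\sA$ on $X\times X$ whose semigroup has upper-right entry $\bigl(\Id-e^{-\lambda s}T(s)\bigr)\Bl$ and whose resolvent has upper-right entry $\tfrac1\mu B_{\mu+\lambda}$ (both quoted from \cite{Eng:99}), and then invokes the equivalence between positivity of a $C_0$-semigroup and resolvent positivity of its generator. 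You instead prove both directions by hand: \eqref{eq:pos-R-3}$\Rightarrow$\eqref{eq:pos-R-4} via the resolvent-type identity $\Bm=\Bl+(\lambda-\mu)\RmA\Bl$ and the Laplace representation of $\RmA$, and \eqref{eq:pos-R-4}$\Rightarrow$\eqref{eq:pos-R-3} by showing that $\mu\mapsto(\mu-\lambda)^{-1}\Bm v$ is completely monotone (your induction formula for the derivatives checks out) and applying the vector-valued Post--Widder inversion together with closedness of $X^+$. This amounts to inlining, for the relevant off-diagonal entry, the proof of the general resolvent-positivity criterion the paper cites; it costs a longer computation but keeps the argument self-contained and avoids the detour through the coupled matrix, and it is in the same spirit as the Post--Widder argument the paper itself invokes for Proposition~\ref{prop:aRp-char}.(iv). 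A further minor difference: you deduce the passage from fixed $t$ to arbitrary $t$ (hence the equivalence with \eqref{eq:pos-R-1x} and \eqref{eq:pos-R-1.5x}) from the $t$-free condition \eqref{eq:pos-R-4} and the semigroup iteration $T(r+s)\Bl v\le e^{\lambda(r+s)}\Bl v$, whereas the paper appeals to Corollary~\ref{cor:Reach} and the monotone growth of the reachability spaces in $t$; both routes are sound.
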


%\item There exist $t>0$, $\lambda_0>\gb(A)$ and $M\in\sL\bigl(\LpntU,X\bigr)$ such that for all $\lambda\ge\lambda_0$ and $v\in U$
%\begin{equation}\label{eq:strange2}
%\bigl(\elt-T(t)\bigr)\Bl v=
%M(\epsl\otimes v).
%\end{equation}

\begin{proof} The equivalence of \eqref{eq:pos-R-1} and  \eqref{eq:pos-R-1.5} follows from the closedness of $X^+$. To show the equivalence of \eqref{eq:pos-R-1} and \eqref{eq:pos-R-2} recall that by \cite[p.14]{ABHN:01} the step functions are dense in $\LpntU$. Since the map $u\mapsto u^+$ on $\LpntU$ is continuous, we conclude that  the positive step functions are dense in $\LpntUp$.
The claim then follows from (the proof of) Proposition~\ref{prop:range-B1} using the boundedness of the controllability map $\sBt$.

Now assume that $\Ttt$ is positive. Then the equivalences of \eqref{eq:pos-R-1}, \eqref{eq:pos-R-1.5}  with \eqref{eq:pos-R-1x}, \eqref{eq:pos-R-1.5x} follow from Corollary~\ref{cor:Reach} using the fact that the reachability spaces are growing in time. In particular, this implies that if \eqref{eq:pos-R-2} holds for some $t>0$ it holds for arbitrary $t>0$ and choosing $\beta=t$ and $\alpha=0$ we obtain \eqref{eq:pos-R-3} for arbitrary $t>0$.

To show the remaining assertions we fix some $\lambda>\gb(A)$ and define on $\sX:=X\times X$ the operator matrix
\[
\sA:=
\begin{pmatrix}
A-\lambda&0\\0&0
\end{pmatrix},\quad
D(\sA):=\Bigl\{\tbinom{x}{y}\in D(A_m)\times\dX:Qx=By\Bigr\}.
\]
Then by \cite[Cor.~3.4]{Eng:99} the matrix $\sA$ generates a $C_0$-semigroup $\sTtt$ given by
\begin{equation}\label{eq:sTt}
\sT(s)=\begin{pmatrix}
e^{-\lambda s}T(s)&\bigl(\Id-e^{-\lambda s}T(s)\bigr)B_{\lambda}\\0&\Id
\end{pmatrix},
\quad s\ge0.
\end{equation}
Moreover, by \cite[Lem.~3.1]{Eng:99} we have $(0,+\infty)\subset\rho(\sA)$ and
\begin{equation} \label{eq:RsA}
R(\mu,\sA)=
\begin{pmatrix}
R(\mu+\lambda,A)&\frac1\mu B_{\mu+\lambda}\\
0&\frac1\mu
\end{pmatrix}
\quad\text{for $\mu>0$}.
\end{equation}
Now, if \eqref{eq:pos-R-3} holds then $\sT(s)\ge0$ for all $0\le s\le t$ which implies that $\sTtt$ is positive which is equivalent to the fact that $\sA$ is resolvent positive. However, by \eqref{eq:RsA} the latter is the case if and only if \eqref{eq:pos-R-4} is satisfied which shows the equivalence of \eqref{eq:pos-R-3} and \eqref{eq:pos-R-4}.
Finally, if \eqref{eq:pos-R-3} holds, then
\begin{align*}
\bigl(e^{\lambda\beta}\,T(t-\beta)-e^{\lambda\alpha}\,T(t-\alpha)\bigr)\Bl
&=e^{\lambda\beta}\,T(t-\beta)\cdot\bigl(\Id-e^{-\lambda(\beta-\alpha)}\,T(\beta-\alpha)\bigr)\Bl\\
&\ge0
\end{align*}
for all $0\le\alpha\le\beta\le t$. This proves \eqref{eq:pos-R-2} and completes the proof. %\textcolor{magenta}{ BAUSTELLE!!! }
\end{proof}

In the sequel we use the notation $\co M$ and $\coc M$ to indicate the  convex hull and the closed convex hull of a set $M\subset X$, respectively.

\begin{proposition}\label{prop:aRp-char}
Assume that $B\in\sL(U,\dX)$ is $p$-boundary admissible and that $\sRBCtp\subset X^+$.
Then the following holds.
\begin{enumerate}[(i)]
\item $\saRBCp$ is a closed convex cone, invariant under $\Ttt$ and $R(\lambda,A)$ for $\lambda>\gb(A)$.
\item 
$
\saRBCp=\coc\l\{\bigl(e^{\lambda \beta}T(t-\beta)-e^{\lambda \alpha}T(t-\alpha)\bigr)\Bl v:0\le \alpha\le\beta\le t,\,v\in U^+\r\}
$
for all $\lambda>\gb(A)$.
\item $\saRBCp=\coc\{T(t)\Bl v:t\ge0,\,\lambda>w,\,v\in U^+\}$ for all $w>\gb(A)$.
\item $\saRBCp=\coc\{\RlA^n\Bl v:n\in\NN_0,\,\lambda>w,\,v\in U^+\}$ for some/all $w>\gb(A)$.
\end{enumerate}
\end{proposition}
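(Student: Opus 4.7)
Part~(i) follows from standard arguments. Closedness of $\saRBCp$ is immediate from the definition; convexity of $\saRBCp$ as a cone follows from linearity of $\sBt$ combined with the fact that $\LpntUp$ is itself a cone, a property preserved under unions and closures. For invariance under $\Ttt$, given $u\in\LpntUp$ and $s\ge 0$, the shifted control $\tilde u(r):=u(r-s)\eins_{[s,s+t]}(r)$ lies in $\rL^p([0,s+t],U^+)$, and the variation of parameters formula yields $T(s)\sBt u=\sB_{s+t}^{\textrm{BC}}\tilde u\in\saRBCp$. Invariance under $\RlA$ for $\lambda>\gb(A)$ follows by approximating the Laplace representation $\RlA x=\int_0^\infty e^{-\lambda s}T(s)x\,ds$ by positive-weight Riemann sums of $T(s_j)x\in\saRBCp$ and invoking closedness. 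Part~(ii) is then the positive analogue of Proposition~\ref{prop:aR}(ii): by Proposition~\ref{prop:range-B1}(b) each generator equals $\sBt(\epsl\cdot\eab\otimes v)$, which lies in $\sRBCtp\subset\saRBCp$ when $v\in U^+$; conversely, the positive step functions are dense in $\LpntUp$ (as in the proof of Proposition~\ref{prop:reach-pos}), and since $\sMl$ is a positive bounded isomorphism on $\LpntU$, positive combinations of $\epsl\cdot\eab\otimes v$ with $v\in U^+$ are also dense, so applying the continuous $\sBt$ transfers this density into the closed convex cone in~(ii).

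For part~(iii) I would first establish $\Bl v\in\saRBCp$ for $\lambda>\gb(A)$ and $v\in U^+$: by~(ii) applied with $\alpha=0,\beta=t$, the element $(\elt-T(t))\Bl v$ lies in $\saRBCp$; scaling by $e^{-\lambda t}>0$ and letting $t\to\infty$, using $\|e^{-\lambda t}T(t)\|\to 0$ for $\lambda>\gb(A)$, yields $\Bl v\in\saRBCp$ by closedness. Together with~(i) this gives the inclusion ``$\supset$''. For ``$\subset$'' the crux is the identity
\[
R(\mu,A)\,\sBt u\;=\;\int_0^t T(t-s)\,\Bm\,u(s)\,ds\qquad(\mu\in\rho(A)),
\]
obtained by commuting $R(\mu,A)$ past the integral in the variation of parameters formula and using $(\lambda-A)R(\mu,A)\Bl=\Bm$ (which follows from Lemma~\ref{lem-Gre}(iii)). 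For $u\in\LpntUp$ the integrand $T(t-s)\Bm\,u(s)$ lies, for a.e.~$s$, in the closed convex cone of~(iii), so the integral does too; multiplying by $\mu>0$ and letting $\mu\to\infty$, the Yosida limit $\mu R(\mu,A)y\to y$ for $y\in X$ combined with closedness gives $\sBt u$ in that same cone, whence $\saRBCp$ is contained in it. I expect the derivation and exploitation of this identity to be the main obstacle.

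For part~(iv), the inclusion ``$\supset$'' is immediate from~(iii): each $\RlA^n\Bl v$ coincides, for $n\ge 1$, with the positive-weight integral $\frac{1}{(n-1)!}\int_0^\infty s^{n-1}e^{-\lambda s}T(s)\Bl v\,ds$, which lies in the closed convex cone of~(iii), i.e.\ $\saRBCp$. For ``$\subset$'', by~(iii) it suffices to show each $T(s)\Bl v$ lies in the closed convex cone $K'$ of~(iv). I would combine the Yosida approximation
\[
T_\mu(s)\Bl v\;=\;e^{-\mu s}\sum_{n=0}^\infty\frac{(\mu^2 s)^n}{n!}\,R(\mu,A)^n\Bl v\;\to\;T(s)\Bl v\quad(\mu\to\infty)
\]
with the Neumann expansion $\Bl v=\sum_{k\ge 0}(\mu-\lambda)^k R(\mu,A)^k\Bm v$, derived iteratively from $\Bm-\Bl=(\lambda-\mu)R(\mu,A)\Bl$; this has nonnegative coefficients and converges for $\mu>\lambda$ sufficiently close to $\lambda$. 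Substituting gives $R(\mu,A)^n\Bl v=\sum_k(\mu-\lambda)^k R(\mu,A)^{n+k}\Bm v\in K'$, hence $T_\mu(s)\Bl v\in K'$; chaining finitely many such Neumann expansions through intermediate parameters $\lambda=\mu_0<\mu_1<\cdots<\mu_N$ lets $\mu$ grow arbitrarily, and closedness of $K'$ then yields $T(s)\Bl v\in K'$.
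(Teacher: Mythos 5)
Parts (i)--(iii) of your argument are essentially sound, with two remarks. First, a slip in (i): with $\tilde u(r):=u(r-s)\eins_{[s,s+t]}(r)$ one computes $\sB_{s+t}^{\textrm{BC}}\tilde u=\int_s^{s+t}T(s+t-r)\Bl u(r-s)\,dr=\sBt u$, so your construction only shows that the positive reachability spaces grow in time; to realize $T(s)\sBt u$ you must instead extend $u$ by zero on $(t,s+t]$ (an easy fix). Second, your proof of ``$\subseteq$'' in (iii), via the identity $R(\mu,A)\sBt u=\int_0^tT(t-r)\Bm u(r)\,dr$ and the Yosida limit $\mu R(\mu,A)\sBt u\to\sBt u$, is a genuinely different and clean route: the paper instead shows that each generator $\bigl(\elb T(t-\beta)-\ela T(t-\alpha)\bigr)\Bl v$ is the limit, as $\nu\to\infty$, of $\nu\int_{t-\beta}^{t-\alpha}e^{\lambda(t-r)}T(r)\Bn v\,dr$, which lies in the cone as a Riemann-sum limit. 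Both arguments exploit Lemma~\ref{lem-Gre}.(iii) and the approximation $\nu R(\nu,A)\to I$ in the same way, and the paper's proof of (i) goes in the opposite logical direction (deducing the invariance from (iii) and (iv)), which your direct argument avoids.

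The genuine gap is in the inclusion ``$\subseteq$'' of (iv). Your Yosida approximation requires $R(\mu,A)^n\Bl v$ to lie in the cone $K'$ for $\mu$ \emph{arbitrarily large} while $\lambda$ stays fixed, but the Neumann series $\Bl v=\sum_k(\mu-\lambda)^kR(\mu,A)^k\Bm v$ converges only when $(\mu-\lambda)\|R(\mu,A)\|<1$; since $\|R(\mu,A)\|\le M/(\mu-\omega)$ with $M\ge1$, this restricts $\mu$ to a bounded interval above $\lambda$ whenever $M>1$. The proposed repair by chaining expansions through $\lambda=\mu_0<\mu_1<\cdots<\mu_N$ does not close this gap: substituting the expansions into one another produces terms $R(\mu_1,A)^{k_1}\cdots R(\mu_N,A)^{k_N}B_{\mu_N}v$, i.e.\ mixed products of resolvents at \emph{different} parameters, and these are not generators of $K'$, whose generators $R(\mu,A)^n\Bm v$ carry the same parameter in both factors. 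Reducing such mixed products via the resolvent identity introduces differences and destroys the positivity of the coefficients, which is exactly what the convex-cone argument needs. So as written the step ``$T_\mu(s)\Bl v\in K'$ for $\mu\to\infty$'' fails. This is precisely the point where the paper does not argue directly but appeals to the Post--Widder inversion formula and refers to the proof of \cite[Prop.~3.3]{BBEAM:13}; an additional idea of that kind is needed to complete your part (iv).
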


\begin{proof} (i). Clearly, $\saRBCp$ is a closed convex cone.  Its invariance  under $\Ttt$ and $\RlA$ for $\lambda>\gb(A)$ follows from the representations in (iii) and (iv).
%Let $z\in\sRBCp$. Since $B$ is $p$-boundary admissible, we have for arbitrary $\tau\ge 0$  and suitable $u\in\LpntUp$
%\[T(\tau) z= (\lambda-A)\int_0^tT(t+\tau-s)\Bl u(s)\ds = (\lambda-A)\int_0^{t+\tau}T(t+\tau-s)\Bl \tilde{u}(s)\ds,\]
%where $\tilde{u}(s): = u(s)$ for $s\in [0,t]$ and is $0$ elsewhere. Hence, $T(\tau)z\in e^+\sR_{t+\tau}^{\textrm{BC}}$ and by boundedness of $T(\tau)$ it follows that $\saRBCp$ is $T(\tau)$-invariant for any $\tau\ge 0$. The integral representation of the resolvent then yields invariance of  $\saRBCp$ also for $R(\lambda,A)$, $\lambda>\gb(A)$.
%\smallbreak

To show (ii) we note that by \eqref{bt-bc} and \eqref{eq:B-zul0} the inclusion ``$\supseteq$'' holds.
Now recall that  the positive step functions are dense in $\LpntUp$ and invariant under positive convey combinations. Hence, the boundedness of the controllability maps implies equality of the spaces in (ii).

\smallbreak

To obtain (iii) we note that by \eqref{bt-bc} and \eqref{eq:B-zul0} we have
\[
\bigl(\elb\,T(t-\beta)-\ela\,T(t-\alpha)\bigr)\Bl v\in\sRBCp
\]
for all $0\le\alpha\le\beta\le t$ and $v\in U^+$.
Multiplying this inclusion by $e^{-\lambda\beta}>0$ and putting $s:=t-\beta$ and $r:=t-\alpha$ implies
\[
\bigl(T(s)-e^{\lambda(s-r)}\,T(r)\bigr)\Bl v\in\sRBCp
\]
for all $0\le s\le r$ and $v\in U^+$. Since $\lambda>\gb(A)$ we obtain
\[
\lim_{r\to+\infty}e^{\lambda(s-r)}\,\|T(r)\|=0
\]
and hence
\[T(s)\Bl v\in\saRBCp\]
for all $s\ge0$ and $v\in U^+$. This shows the inclusion ``$\supseteq$'' in (iii). 
For the converse inclusion in (iii) it suffices to prove that
\begin{equation}\label{eq:inc2.(ii)}
\sRBCtp\subset\coc\bigl\{T(s)\Bm y:s\ge0,\,\mu>w,\,y\in U^+\bigr\}
\end{equation}
for all $t>0$ and $w>\gb(A)$. Since $B\in\sL(U,\dX)$ is $p$-boundary admissible the controllability map $\sBt$ is continuous. Moreover, the positive step functions are dense in $\LpntUp$ and $\coc\{T(s)\Bm y:s\ge0,\,\mu>w,\,y\in U^+\}$ is a convex cone. Combining these facts and \eqref{eq:B-zul0}  it follows that \eqref{eq:inc2.(ii)} holds if
\begin{equation}\label{eq::inc2.(ii)-1}
\bigl(\elb T(t-\beta)-\ela T(t-\alpha)\bigr)\Bl v\in \coc\bigl\{T(s)\Bm y:s\ge0,\,\mu>w,\,y\in U^+\bigr\}
\end{equation}
for all  $0\le\alpha\le\beta\le t$, $k\in\NN_0$ and $v\in X^+$. 
Since $\Ttt$ is strongly continuous the following integral is the limit of Riemann sums, hence for $\nu>\max\{0,w\}$ we obtain using Lemma~\ref{lem-Gre}.(iii)
\begin{align*}
\coc\bigl\{T(s)\Bm y:s\ge0,\,\mu>w,\,y\in U^+\bigr\}&\ni\nu\int_{t-\beta}^{t-\alpha}e^{\lambda(t-r)}T(r)\Bn v\dr\\
&=\bigl(\elb\,T(t-\beta)-\ela\,T(t-\alpha)\bigr)\nu\RlA\Bn v\\
&=\bigl(\elb\,T(t-\beta)-\ela\,T(t-\alpha)\bigr)\nu\RnA\Bl v\\
&\to\bigl(\elb\,T(t-\beta)-\ela\,T(t-\alpha)\bigr)\Bl v,
\end{align*}
as $\nu\to+\infty$. 
This proves \eqref{eq::inc2.(ii)-1} and completes the proof of (iii). 
\smallbreak
That the right-hand-sides of the equalities in (iii) and (iv) coincide follows from the integral representation of the resolvent (see \cite[Cor.~II.1.11]{EN:00}) and the Post--Widder inversion formula (see \cite[Cor.~III.5.5]{EN:00}). For the details we refer to the proof of \cite[Prop.~3.3]{BBEAM:13}.
%\textcolor{magenta}{NICHT klar}
\end{proof}

%Proposition~\ref{prop:aRp-char} yields another necessary and sufficient conditions implying that starting from the initial state $x_0=0$ positive controls result in positive states for all times $t\ge0$.
%
%\begin{corollary}
%Assume that $B\in\sL(U,\dX)$ is $p$-boundary admissible. Then
%\begin{equation}\label{eq:pos-R-1.5b}
%\saRBCp\subset X^+
%\end{equation}
%if and only if there exists $\lambda\in\RR\cap\rho(A)$ such that
%\begin{equation}\label{eq:pos-R-2b}
%\bigl(\elb T(t-\beta)-\ela T(t-\alpha)\bigr)\Bl\ge0
%\qquad\text{for all $0\le\alpha\le\beta\le t$}
%\end{equation}
%if and only if
%\begin{equation}
%T(s)\Bl v\ge0\text{ for all $s\ge0$, $\lambda>w$, $v\in U^+$ for some/all $w>\gb(A)$}
%\end{equation}
%if and only if
%\begin{equation}\label{eq:pos-R-2.5b}
%\RlA^n\Bl v\ge0\text{ for all $n\in\NN_0$, $\lambda>w$, $v\in U^+$ for some/all $w>\gb(A)$}.
%\end{equation}
%This holds in particular if $\Ttt$ is positive and there exists $w>\gb(A)$ such that $\Bl\ge0$ for all $\lambda>w$.
%\end{corollary}

\begin{corollary}
Assume that $B\in\sL(U,\dX)$ is $p$-boundary admissible and that $\saRBCp\subset X^+$. Then the following are equivalent.
\begin{enumerate}[(a)]
\item The system \SBC{} is approximately positive controllable.
\item There exists $w>\gb(A)$ such that  the following implication holds for all $\phi\in X'$
\[
\big<T(s)\Bl v,\phi\big>\ge0\text{ for all $v\in U^+$, $s\ge0$ and $\lambda>w$ $\Rightarrow$ $\phi\ge0$.}
\]
\item There exists $w>\gb(A)$ such that the following implication holds for all $\phi\in X'$
\[
\bigl<\RlA^n\Bl v,\phi\bigr>\ge0 \text{ for all $v\in U^+$, $n\in\NN$ and $\lambda>w$ $\Rightarrow$ $\phi\ge0$.}
\]
\end{enumerate}
\end{corollary}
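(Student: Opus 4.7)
My plan is a standard Hahn--Banach separation argument combined with the representations of $\saRBCp$ provided by Proposition~\ref{prop:aRp-char}(iii) and (iv).

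First, I would establish the following reformulation of approximate positive controllability. Under the standing hypothesis $\saRBCp\subset X^+$, condition (a) holds if and only if every $\phi\in X'$ that is nonneg on $\saRBCp$ is already positive, meaning $\langle y,\phi\rangle\ge0$ for all $y\in X^+$. The forward implication is immediate since then $\saRBCp=X^+$. For the converse, suppose the implication holds but $\saRBCp\subsetneq X^+$, and pick $x_0\in X^+\setminus\saRBCp$. Since $\saRBCp$ is a closed convex subset of $X$ by Proposition~\ref{prop:aRp-char}(i), the geometric Hahn--Banach theorem yields $\phi\in X'$ strictly separating $\{x_0\}$ from $\saRBCp$. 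Because $0\in\saRBCp$ and $\saRBCp$ is a cone, the infimum $\inf\{\langle x,\phi\rangle:x\in\saRBCp\}$ equals $0$ (otherwise it is $-\infty$); hence $\phi$ is nonneg on $\saRBCp$ while $\langle x_0,\phi\rangle<0$ with $x_0\in X^+$, contradicting positivity of $\phi$.

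Second, I would translate the condition ``$\phi$ nonneg on all of $\saRBCp$'' into nonneg on the explicit generating sets in (b) and (c). For any $\phi\in X'$, nonneg on a set $S\subseteq X$ is equivalent to nonneg on $\coc S$, since $\phi$ is linear and continuous and $\coc S$ is obtained from finite convex combinations and limits. Applied to
\[
S_1 := \bigl\{T(s)\Bl v:s\ge0,\,\lambda>w,\,v\in U^+\bigr\}
\]
together with Proposition~\ref{prop:aRp-char}(iii), this yields (a)$\iff$(b); applied to
\[
S_2 := \bigl\{\RlA^n \Bl v:n\in\NN_0,\,\lambda>w,\,v\in U^+\bigr\}
\]
together with Proposition~\ref{prop:aRp-char}(iv), it yields (a)$\iff$(c). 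Note that in both cases $S_i$ is closed under nonneg scaling (because $U^+$ is a cone), which is consistent with $\coc S_i$ being a cone.

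The main obstacle, really the only one, is making sure the Hahn--Banach separation applied to a point outside the closed convex cone $\saRBCp$ produces a functional that is nonneg on the entire cone, rather than merely bounded below on it. As indicated above, this is automatic from the cone property together with $0\in\saRBCp$. All the substantive work has already been done in Proposition~\ref{prop:aRp-char}, so the corollary follows as a bipolar-type reformulation of its representations (iii) and (iv).
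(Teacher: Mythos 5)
Your argument is correct and is essentially the same one the paper uses: the paper's proof simply outsources the Hahn--Banach separation/duality step to the proof of \cite[Thm.~3.4]{BBEAM:13}, substituting its Proposition~\ref{prop:aRp-char} for the representation of $\saRBCp$, which is exactly the combination (closed convex cone separation plus the generating sets from parts (iii) and (iv)) that you spell out explicitly. Your handling of the only delicate point --- that separating a point of $X^+$ from the closed convex cone $\saRBCp\ni 0$ yields a functional that is genuinely nonnegative on the whole cone --- is the right one.
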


\begin{proof}
This follows from the proof of \cite[Thm.~3.4]{BBEAM:13} by replacing \cite[Prop.~3.3]{BBEAM:13}  with our Proposition~\ref{prop:aRp-char}.
\end{proof}

\begin{remark}
The previous two results generalize \cite[Prop.~3.3 and Thm.~3.4]{BBEAM:13}, respectively, where it is assumed that $\Ttt$, $B$ and $Q_{\lambda}$ for all $\lambda>\lambda_0$ are all positive and, in particular, the additional hypothesis
\begin{itemize}
\item[(H)]  \emph{There exists $\gamma>0$ and $\lambda_0\in\RR$ such that $\|Qx\|\ge\gamma\lambda\|x\|$ for all $\lambda>\lambda_0$ and $x\in\ker(\lambda-A_m)$}
\end{itemize}
is made. We note that Hypothesis~(H) in reflexive state spaces $X$ implies that $A=A_m$, cf. \cite[Lem.~A.1]{ABE:16}. Hence, the results of \cite{BBEAM:13} are, e.g., not applicable to  state space like $X=\rL^p([a,b],Y)$ for $p\in(1,+\infty)$ and reflexive $Y$.
\end{remark}

\smallskip
Combining Corollary~\ref{cor:Bn} and Proposition \ref{prop:reach-pos}
we finally obtain the following characterization of an exact positive reachability space.

\begin{corollary}\label{cor:Reach_pos}
Assume that $B$ is $p$-boundary admissible, $t>0$ and $n\in\NN_1$.
Then the exact positive reachability space in time $nt$ is given by
\begin{equation*}
\sRBCntp=\l\{\sum_{k=0}^{n-1}T(t)^k M u_k : u_k\in\LpntUp,\; 1\le k\le n-1 \r\},
\end{equation*}
where $M\in\sL\bigl(\LpntUp,X\bigr)$ is the operator from Proposition~\ref{prop:range-B1}. Moreover, the operator $M$ is positive  if and only if $\saRBCtp\subset X^+$.
\end{corollary}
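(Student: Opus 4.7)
The plan is to derive the representation of $\sRBCntp$ directly from Corollary~\ref{cor:Bn} and then to obtain the positivity equivalence using the identification $M=\sBt$ from Proposition~\ref{prop:range-B1}.

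For the set equality, I would start from the formula $\sBnt u=\sum_{k=0}^{n-1}T(t)^k M u_{k}$ in \eqref{eq:sBnt} with $u_k$ given by \eqref{eq:def-uk}. The key observation is that the map $u\mapsto(u_0,\ldots,u_{n-1})$ is a (bijective) isometry from $\LpnntU$ onto $\bigl(\LpntU\bigr)^n$ that sends the positive cone onto the product of positive cones: on each subinterval $[(n-k-1)t,(n-k)t]$ the correspondence is merely a translation, so $u\ge 0$ a.e.\ on $[0,nt]$ is equivalent to $u_k\ge 0$ a.e.\ on $[0,t]$ for every $k$. The inclusion ``$\subseteq$'' of the claimed formula is then immediate by applying \eqref{eq:sBnt} to $u\in\LpnntUp$; for ``$\supseteq$'', given positive pieces $u_0,\ldots,u_{n-1}\in\LpntUp$, I would concatenate them into a single $u\in\LpnntUp$ via $u(r):=u_k(r-(n-k-1)t)$ on $[(n-k-1)t,(n-k)t]$ and apply \eqref{eq:sBnt}.

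For the equivalence between positivity of $M$ and $\saRBCtp\subset X^+$, I would use $M=\sBt$ (the last assertion of Proposition~\ref{prop:range-B1}) to write $\sRBCtp=M(\LpntUp)$. If $M\ge 0$, then $\sRBCtp\subset X^+$, and since $X^+$ is closed taking closures yields $\saRBCtp\subset X^+$. Conversely, from $\saRBCtp\subset X^+$ one deduces $\sRBCtp\subset X^+$ (this is precisely \eqref{eq:pos-R-1.5}$\Rightarrow$\eqref{eq:pos-R-1} in Proposition~\ref{prop:reach-pos}, and in fact is trivial here since $\sRBCtp\subset\saRBCtp$), so that $Mu=\sBt u\ge 0$ for every $u\in\LpntUp$, which is exactly the positivity of $M$.

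No serious obstacle is expected; the result is essentially a bookkeeping consequence of Proposition~\ref{prop:range-B1}, Corollary~\ref{cor:Bn} and Proposition~\ref{prop:reach-pos}. The one small point to watch is the time-reversal indexing in \eqref{eq:def-uk}: one has to check that the bijection between $\LpnntU$ and $\bigl(\LpntU\bigr)^n$ does preserve the positive cone. This however is immediate from the translational (hence order-preserving) nature of the correspondence.
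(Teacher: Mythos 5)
Your proposal is correct and follows exactly the route the paper intends: the paper gives no written proof for this corollary beyond the remark that it follows by ``combining Corollary~\ref{cor:Bn} and Proposition~\ref{prop:reach-pos}'', and your argument is precisely that combination, with the positivity-preserving bijection $u\mapsto(u_0,\dots,u_{n-1})$ and the identification $M=\sBt$ filled in as the (routine) missing details.
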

%\newpage

\section{Examples}\label{examples}

In this section we will show how our abstract results can be applied to a transport equation with boundary control and to the vertex control of flows in networks.

\subsection{Exact Boundary Controllability for a Transport Equation}

In this subsection we study a transport equation in $\RR^m$ given by\footnote{We denote by $x'(t,s)$ the derivative of $x(t,s)$ with respect to the ``space'' variable $s$.}
\begin{alignat}{2}\label{eq:TE}
\begin{cases}
\dot x(t,s)=x'(t,s),& s\in[0,1],\ t\ge0,\\
x(t,1)=\BB x(t,0)+u(t)\cdot b,&t\ge0,\\
x(0,s)=0,& s\in[0,1].
\end{cases}
\end{alignat}
Here $x:\RR_+\times[0,1]\to\CC^m$, $\BB\in\MmC$, $u:\RR_+\to\CC$ and $b\in\Cm$. In order to fit this system in our general framework we choose

\begin{itemize}
\item the  state space $X:=\LpneCm$ where $1\le p<+\infty$, %and $Y:=\Cm$ such that the Assumption~\ref{add-assum} is satisfied with $X=\LpneY$,
\item the boundary space $\dX:=\Cm$,
\item the control space $U:=\CC$,
\item the control operator $B:=b\in\Cm\simeq\sL(U,\dX)=\sL(\CC,\Cm)$,
\item the system operator 
\[\Am:=\diag\l(\dds\r)_{m\times m}\quad\text{with domain}\quad D(\Am):=\WepneCm,\]
\item the boundary operator $Q:\WepneCm\to\Cm$, $Qf:=f(1)-\BB f(0)$,
\item the operator $A\subset\Am$ with domain $D(A)=\ker Q$,
\item the state trajectory $x:\RR_+\to\LpneCm$, $x(t):=x(t,\cdot)$.
%\item
\end{itemize}
With these choices the controlled transport equation~\eqref{eq:TE} can be reformulated as an abstract Cauchy problem with boundary control of the form \eqref{ACPBC}. Clearly, the boundary operator $Q$ is surjective. 

\smallbreak
By \cite[Cor.~18.4]{BKR:17}
%\cite[Thm.3.2]{Dor:08} 
we know that for $\lambda\in\CC$ and $A=\Am|_{\ker(Q)}$ as above we have
\[
\lambda\in\rho(A)\iff
e^\lambda\in\rho(\BB).
\]
Moreover, by  \cite[Prop.~18.7]{BKR:17}
% \cite[Prop.3.3]{Dor:08} 
 the operator $A$ generates a strongly continuous semigroup given by
\begin{equation}\label{eq:HG-TE}
\bigl(T(t)f\bigr)(s)=\BB^k f(t+s-k)\quad \text{if }\;t+s\in[k,k+1) \text{ for } k\in\NN_0,
\end{equation}
where $\BB^0:=Id$.
This shows that the Assumptions~\ref{ma-bcs} are satisfied. To proceed we have to compute the associated Dirichlet operator.

\begin{lemma} For $\lambda\in\rho(A)$ the Dirichlet operator $\Ql\in\sL\bigl(\Cm,\LpneCm\bigr)$ is given by
\begin{equation}\label{eq:Dirich-TE}
\Ql=\epsl\otimes \RelBB.
\end{equation}
\end{lemma}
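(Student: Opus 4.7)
The plan is to compute $\ker(\lambda-\Am)$ explicitly and then invert the restriction of $Q$ to this kernel. This is a direct calculation with no real obstacle; the only point that requires the hypothesis $\lambda\in\rho(A)$ is the invertibility of the matrix appearing in the formula.

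First I would identify the kernel: since $\Am=\diag(\tfrac{d}{ds})_{m\times m}$ with domain $\WepneCm$, the equation $(\lambda-\Am)f=0$ reduces to the componentwise ODE $f_j'=\lambda f_j$ on $[0,1]$. All such solutions are smooth and therefore lie in $\WepneCm$, so
\[
\ker(\lambda-\Am)=\bigl\{\epsl\otimes c:c\in\Cm\bigr\}.
\]

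Next I would evaluate the boundary operator on this kernel. For $f=\epsl\otimes c$ we have $f(0)=c$ and $f(1)=e^\lambda c$, hence
\[
Qf=f(1)-\BB f(0)=\bigl(\el-\BB\bigr)c.
\]
By \cite[Cor.~18.4]{BKR:17} the assumption $\lambda\in\rho(A)$ is equivalent to $\el\in\rho(\BB)$, so the matrix $\el-\BB$ is invertible on $\Cm$ with inverse $\RelBB$. Given $d\in\Cm$, setting $c:=\RelBB d$ and $f:=\epsl\otimes c$ we obtain the unique element of $\ker(\lambda-\Am)$ satisfying $Qf=d$. By the very definition of the Dirichlet operator this gives
\[
\Ql d=\epsl\otimes \RelBB d,\qquad d\in\Cm.
\]

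Finally, boundedness of $\Ql\in\sL(\Cm,\LpneCm)$ is immediate: $\RelBB$ is a bounded operator on the finite-dimensional space $\Cm$ and $\epsl\in\Lpne$, so the map $d\mapsto \epsl\otimes \RelBB d$ is bounded from $\Cm$ into $\LpneCm$. This completes the verification.
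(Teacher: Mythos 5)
Your proof is correct and follows essentially the same route as the paper's: both reduce to the one-line computation $Q(\epsl\otimes c)=(\el-\BB)\,c$ on the kernel $\ker(\lambda-\Am)=\{\epsl\otimes c: c\in\Cm\}$. The only cosmetic difference is that the paper invokes the abstract invertibility of $Q|_{\ker(\lambda-\Am)}$ from Lemma~\ref{lem-Gre}.(ii) and merely verifies that $\epsl\otimes\RelBB$ is a right inverse, whereas you re-derive the invertibility directly from the equivalence $\lambda\in\rho(A)\iff\el\in\rho(\BB)$; both are complete.
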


\begin{proof}
By Lemma~\ref{lem-Gre}.(ii) we know that $Q:{\ker(\lambda-\Am)}\to\dX$ is invertible. Moreover, for $d\in\Cm=\dX$ we have
\begin{align*}
Q\bigl(\epsl\otimes \RelBB\,d\bigr)
=\el\cdot \RelBB\,d-\BB\cdot \RelBB\,d=d
\end{align*}
which proves \eqref{eq:Dirich-TE}.
\end{proof}

Next we verify that in this context \eqref{eq:strange1} holds.

\begin{lemma}\label{lem:strange-TE}
Let $\lambda\in\rho(A)$. Then for all $0\le\alpha\le1$
\begin{equation}\label{eq:strange-TE}
\bigl(\ela\cdot T(1-\alpha)\Bl \bigr)(s)=
\begin{cases}
\epsl(1+s)\cdot\RelBB\, b&\text{if }0\le s<\alpha,\\
\epsl(1+s)\cdot\RelBB\, b-\epsl(s)\cdot b&\text{if }\alpha\le s\le1.
\end{cases}
\end{equation}
Hence, \eqref{eq:strange1} is satisfied for
\begin{align*}
%\Ml(s)&=\epsl(1+s)\cdot\RelBB\, b\in\sL(\CC,\Cm)\simeq\Cm,\\
M&=b\in\sL\bigl(\Lpne,\LpneCm\bigr),\
(M u)(s)=u(s)\cdot b.
\end{align*}
\end{lemma}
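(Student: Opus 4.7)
The plan is to first verify \eqref{eq:strange-TE} by a direct piecewise computation using the explicit semigroup \eqref{eq:HG-TE} and the Dirichlet formula \eqref{eq:Dirich-TE}, and then to read off the operator $M$ by specialising \eqref{eq:strange1} to $\beta=t=1$, which by the remark following Proposition~\ref{prop:range-B1} is enough to determine $M$.

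For the first step I start from $\Bl=\Ql b$, which by the previous lemma is the function $s\mapsto e^{\lambda s}\RelBB b$ in $\LpneCm$. Applying the semigroup at time $1-\alpha$ via \eqref{eq:HG-TE} forces a case split at $s=\alpha$: when $s<\alpha$ the shifted argument $(1-\alpha)+s$ lies in $[0,1)$, so $k=0$ and $\bigl(T(1-\alpha)\Bl\bigr)(s)=e^{\lambda(1-\alpha+s)}\RelBB b$, whereas for $s\ge\alpha$ the argument lies in $[1,2)$, so $k=1$ and one picks up an extra factor $\BB$, giving $\BB\,e^{\lambda(s-\alpha)}\RelBB b$. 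Multiplying by $e^{\lambda\alpha}$, the first branch already matches \eqref{eq:strange-TE}. For the second branch I use the resolvent identity $\BB\RelBB=e^{\lambda}\RelBB-\Id$ (which follows from $(e^\lambda-\BB)\RelBB=\Id$) to rewrite $e^{\lambda\alpha}\BB\, e^{\lambda(s-\alpha)}\RelBB b=e^{\lambda s}\bigl(e^{\lambda}\RelBB-\Id\bigr)b=e^{\lambda(1+s)}\RelBB b - e^{\lambda s}b$, which is precisely the second branch.

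For the second step I set $\beta=t=1$ in \eqref{eq:strange1}. Since $T(0)=\Id$, the first term equals $e^{\lambda}\Bl v$, which by the Dirichlet formula is the constant-in-$\alpha$ function $s\mapsto e^{\lambda(1+s)}\RelBB bv$ on all of $[0,1]$. Subtracting the piecewise expression obtained in the previous step cancels the $\RelBB$ contribution on $[0,\alpha)$ and leaves $e^{\lambda s}bv$ on $[\alpha,1]$, which is exactly $\bigl(\epsl\cdot\eins_{[\alpha,1]}\otimes v\bigr)(s)\cdot b$. Hence $M$ must be the pointwise multiplication $(Mu)(s)=u(s)\cdot b$, and by the remark following Proposition~\ref{prop:range-B1} this family (parametrised by $\alpha\in[0,1]$ with $\beta=t=1$) determines $M$ on all of $\Lpne$. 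The only delicate point in the argument is the bookkeeping of the piecewise cases together with the correct application of the resolvent identity; the remainder is pure algebra.
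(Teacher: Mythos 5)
Your proof is correct and follows essentially the same route as the paper: apply the explicit semigroup formula \eqref{eq:HG-TE} to $\Bl=\epsl\otimes\RelBB b$, split cases at $s=\alpha$, and use $\BB\RelBB=\el\RelBB-\Id$ to obtain \eqref{eq:strange-TE}. The only difference is that you explicitly carry out the subtraction for $\beta=t=1$ to identify $M$ (correctly invoking the remark after Proposition~\ref{prop:range-B1} that this special case suffices), a step the paper leaves implicit.
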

\begin{proof} The claim follows from \eqref{eq:Dirich-TE} and \eqref{eq:HG-TE} by the following simple computation.
\begin{align*}
\bigl(\ela\cdot T(1-\alpha)\Bl \bigr)(s)
&=\ela\cdot\Bigl(T(1-\alpha)\bigl(\epsl\otimes\RelBB\,b\bigr)\Bigr)(s)
\\
&=\ela\cdot
\begin{cases}
\epsl(1-\alpha+s)\cdot\RelBB\, b&\text{if }0\le s<\alpha,\\
\epsl(s-\alpha)\cdot\BB\RelBB\, b&\text{if }\alpha\le s\le1,
\end{cases}
\\
&=
\begin{cases}
\epsl(1+s)\cdot\RelBB\, b&\text{if }0\le s<\alpha,\\
\epsl(1+s)\cdot\RelBB\, b-\epsl(s)\cdot b&\text{if }\alpha\le s\le1.
\end{cases}
\qedhere
\end{align*}
\end{proof}
Thus $B$ is $p$-boundary admissible. Next we compute the appropriate reachability space.
 
\begin{corollary}\label{cor:Reach-TE}
If $t\ge m$ then the exact reachability space of the controlled transport equation~\eqref{eq:TE} is given by
\begin{equation*}%\label{eq:Reach-TE}
\sRBCt=\sRBC=\Lpne\otimes\lin\l\{b,\BB b,\ldots,\BB^{m-1}b\r\}.
\end{equation*}
\end{corollary}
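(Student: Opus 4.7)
The plan is to compute the exact reachability space at integer times via Corollary~\ref{cor:Bn}, use the Cayley--Hamilton theorem to stabilise the resulting Krylov-type spans at $n=m$, and then extend to arbitrary $t\ge m$ by the obvious monotonicity of reachability sets in time.

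The first step rests on combining Lemma~\ref{lem:strange-TE}, which identifies the operator $M$ in Proposition~\ref{prop:range-B1} as $(Mu)(s)=u(s)\cdot b$, with the semigroup formula~\eqref{eq:HG-TE}. At $\tau=1$ the latter reduces to $(T(1)f)(s)=\BB\,f(s)$ for a.e.\ $s\in[0,1]$, so $T(1)^k$ is simply pointwise multiplication by $\BB^k$ on $\LpneCm$. Substituting into~\eqref{eq:sBnt} turns Corollary~\ref{cor:Bn} (applied with $t=1$) into the explicit formula
\[
\sBn u=\sum_{k=0}^{n-1}u_k\cdot\BB^kb,\qquad u\in\rL^p\bigl([0,n],\CC\bigr),
\]
where $u_k\in\Lpne$ is given by~\eqref{eq:def-uk}. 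Since the map $u\mapsto(u_0,\ldots,u_{n-1})$ is surjective onto $\Lpne^n$, this yields $\sRBCn=\Lpne\otimes\lin\{b,\BB b,\ldots,\BB^{n-1}b\}$ for every $n\in\NN_1$.

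Applying the Cayley--Hamilton theorem to $\BB$ gives $\BB^kb\in\lin\{b,\BB b,\ldots,\BB^{m-1}b\}$ for every $k\ge m$, so the sequence $(\sRBCn)_n$ stabilises at $n=m$, and in particular $\sRBC=\bigcup_{n}\sRBCn=\Lpne\otimes\lin\{b,\BB b,\ldots,\BB^{m-1}b\}$. To extend this to non-integer $t\ge m$ I would record the elementary monotonicity $\Rg(\sB_{t'}^{\textrm{BC}})\subseteq\Rg(\sBt)$ for $0\le t'\le t$: given $x=\sB_{t'}^{\textrm{BC}}u'$, extending $u'$ by zero on $[0,t-t']$ produces a control $u\in\rL^p([0,t],\CC)$ with $\sBt u=x$, as the change of variable $s\mapsto s-(t-t')$ in~\eqref{bt-bc} immediately shows. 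Hence $\sRBCn\subseteq\sRBCt\subseteq\sRBC$ for any integer $n\le t$; taking $n=m$ then forces $\sRBCt=\sRBC$ whenever $t\ge m$.

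The only substantive point is recognising that $T(1)^k$ acts as multiplication by $\BB^k$, at which point Corollary~\ref{cor:Bn} collapses into a clean algebraic sum indexed by the Krylov vectors $\BB^kb$. The remaining ingredients -- the Cayley--Hamilton stabilisation of these vectors at $m$ and the zero-extension monotonicity argument -- are routine bookkeeping and should not present any genuine obstacle.
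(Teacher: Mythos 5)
Your proposal is correct and follows essentially the same route as the paper: identify $T(1)$ as multiplication by $\BB$, feed Lemma~\ref{lem:strange-TE} into Corollary~\ref{cor:Reach} to get the Krylov-type description at integer times, stabilise with Cayley--Hamilton, and conclude for all $t\ge m$ by monotonicity of $\sRBCt$ in $t$. You merely spell out two steps the paper leaves implicit (the surjectivity onto the tuples $(u_0,\dots,u_{n-1})$ and the zero-extension argument for monotonicity), which is fine.
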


\begin{proof}
Note that by \eqref{eq:HG-TE} we have $T(1)f=\BB f$. Hence, for $t=m$ the assertion follows immediately from Corollary~\ref{cor:Reach} and Lemma~\ref{lem:strange-TE}. Clearly, $\sRBCt$ increases in time $t\ge0$. However, by the Cayley--Hamilton theorem $\lin\{b,\BB b,\ldots,\BB^{l}b\}=\lin\{b,\BB b,\ldots,\BB^{m-1}b\}$ for all $l\ge m-1$ and the claim follows.
\end{proof}

\begin{remark}\label{rem:C-H}
Let $l\le m$ be the degree of the minimal polynomial of $\BB$. Then the previous proof shows that for all $t\ge l$ we even have
\begin{equation*}%\label{eq:Reach-TE}
\sRBCt=\sRBC=\Lpne\otimes\lin\l\{b,\BB b,\ldots,\BB^{l-1}b\r\}.
\end{equation*}
\end{remark}

\begin{corollary}\label{cor:TE} 
The following assertions are equivalent.
\begin{enumerate}[(a)]
\item Equation~\eqref{eq:TE} is exactly boundary controllable in time $t\ge m$, i.e.,
$\sRBCt=X$.
\item Equation~\eqref{eq:TE} is maximally controllable in time $t\ge m$, i.e., 
$\sRBCt=\sRBCm\,$.
\item $\lin\l\{b,\BB b,\ldots,\BB^{m-1}b\r\}=\CC^m$.
\end{enumerate}
\end{corollary}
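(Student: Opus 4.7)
The plan is to deduce everything from the explicit representation of $\sRBCt$ given in Corollary~\ref{cor:Reach-TE}, together with a short computation identifying the maximal reachability space $\sRBCm$ with the whole state space $X$.

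First I would handle (a)$\Leftrightarrow$(c). Writing $V:=\lin\{b,\BB b,\ldots,\BB^{m-1}b\}\subseteq\CC^m$, Corollary~\ref{cor:Reach-TE} gives $\sRBCt=\Lpne\otimes V$, which under the natural identification coincides with $\rL^p([0,1],V)$. This equals $X=\LpneCm$ precisely when $V=\CC^m$: for the forward implication, any $v_0\in\CC^m\setminus V$ yields a constant function $\eins_{[0,1]}\otimes v_0$ lying in $X$ but not in $\rL^p([0,1],V)$; the converse is immediate.

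Next I would show that $\sRBCm=X$ in this example, after which (a)$\Leftrightarrow$(b) becomes straightforward. Since $f\in\WepneCm$ satisfies $f'=\lambda f$ if and only if $f(s)=\epsl(s)\,c$ for some $c\in\CC^m$, we have $\ker(\lambda-\Am)=\epsl\otimes\CC^m$, hence
\[
\sRBCm=\linq\bigl\{\epsl\otimes c:\lambda>\gb(A),\;c\in\CC^m\bigr\}.
\]
As $\CC^m$ is finite-dimensional, it suffices to show that $\lin\{\epsl:\lambda>\gb(A)\}$ is dense in $\Lpne$. I would establish this by duality: any $g$ in the dual space $\rL^q[0,1]$ annihilating all such $\epsl$ produces an entire function $\mu\mapsto\int_0^1 g(s)\,e^{\mu s}\ds$ that vanishes on a half-line, hence identically; evaluation along the imaginary axis and Fourier uniqueness then force $g=0$. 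This density step is the one nontrivial point of the whole argument.

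Finally, Proposition~\ref{prop:aR}(iii) yields $\sRBCt\subseteq\saRBC\subseteq\sRBCm$, and together with $\sRBCm=X$ this immediately gives (a)$\Leftrightarrow$(b): if $\sRBCt=X$ then all three spaces coincide and in particular equal $\sRBCm$, whereas conversely $\sRBCt=\sRBCm=X$ yields (a).
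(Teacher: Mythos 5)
Your argument is correct and follows the same skeleton as the paper's proof: both reduce everything to Corollary~\ref{cor:Reach-TE} and to the identity $\sRBCm=\Lpne\otimes\CC^m=X$, obtained from $\ker(\lambda-\Am)=\epsl\otimes\CC^m$ together with the density of $\lin\{\epsl:\lambda>\gb(A)\}$ in $\Lpne$. The only point where you diverge is in proving that density: the paper invokes the Stone--Weierstra\ss{} theorem (the real span of these exponentials is a point-separating, nowhere-vanishing subalgebra of $\rC[0,1]$, hence dense there and a fortiori in $\Lpne$ for $p<\infty$), whereas you argue by duality, extending $\mu\mapsto\int_0^1 g(s)e^{\mu s}\,ds$ for an annihilating $g\in\rL^q[0,1]$ to an entire function vanishing on a half-line, hence identically, and concluding $g=0$ by Fourier uniqueness. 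Both routes are standard and valid; yours trades the verification of the Stone--Weierstra\ss{} hypotheses for an analytic-continuation step, with no gain or loss in generality here. Your treatment of (a)$\Leftrightarrow$(c) and of (a)$\Leftrightarrow$(b) via $\sRBCt\subseteq\saRBC\subseteq\sRBCm$ is exactly the ``follows immediately'' part of the paper's proof, spelled out.
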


\begin{proof} Note that $\ker(\lambda-A_m)=\epsl\otimes\CC^m$. Since by the Stone--Weierstra\ss{} theorem we have
\[\linq\bigcup_{\lambda>\gb(A)} \{\epsl\}=\Lpne,\]
the maximal reachability space equals
\[
\sRBCm=\Lpne\otimes\CC^m = X
\]
and the assertions follow immediately from Corollary~\ref{cor:Reach-TE}.
\end{proof}

\begin{remark}
The previous result characterizes the \emph{exact} maximal boundary controllability by a one-dimensional control in terms of a Kalman-type condition  which is well-known in control theory. 
\end{remark}

Combining Remark~\ref{rem:C-H} and Corollary~\ref{cor:TE} we furthermore obtain the following

\begin{corollary} Let $l\in\NN$ be the degree of the minimal polynomial of $\BB$. If $l < m$, the transport equation~\eqref{eq:TE} is not 
maximally controllable, i.e., $\sRBC \subsetneq\sRBCm$. 
\end{corollary}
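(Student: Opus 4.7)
The plan is a short combination of Remark~\ref{rem:C-H} and (the proof of) Corollary~\ref{cor:TE}, observing that the Krylov subspace generated by $b$ under $\BB$ is too small when $l<m$. By Remark~\ref{rem:C-H}, since $\BB$ has minimal polynomial of degree $l$, the exact reachability space equals
\[
\sRBC=\Lpne\otimes\lin\{b,\BB b,\ldots,\BB^{l-1}b\},
\]
while the proof of Corollary~\ref{cor:TE} identifies
\[
\sRBCm=\Lpne\otimes\CC^m=X.
\]

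Set $V:=\lin\{b,\BB b,\ldots,\BB^{l-1}b\}\subseteq\CC^m$. Being spanned by $l$ vectors, $V$ satisfies $\dim V\le l<m$, so $V\subsetneq\CC^m$. To lift this strict inclusion to the tensor product, I would pick any $w\in\CC^m\setminus V$ and any nonzero $f\in\Lpne$; under the canonical identification of $\Lpne\otimes\CC^k$ with $\rL^p([0,1],\CC^k)$ used throughout this subsection, the element $f\otimes w$ lies in $\Lpne\otimes\CC^m$ but not in $\Lpne\otimes V$. This yields $\sRBC\subsetneq\sRBCm$, as claimed.

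There is no real obstacle here: every step is either a direct invocation of an already proved statement or a one-line linear-algebra observation (the crucial one being $\dim V\le l<m$). The only tiny subtlety to flag is the correct interpretation of the tensor notation, but this is consistent with how it has been used in Corollary~\ref{cor:Reach-TE} and Corollary~\ref{cor:TE}.
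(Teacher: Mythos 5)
Your proposal is correct and follows essentially the same route as the paper, which derives this corollary directly by combining Remark~\ref{rem:C-H} with (the proof of) Corollary~\ref{cor:TE}; your only addition is to spell out the elementary step that $\lin\{b,\BB b,\ldots,\BB^{l-1}b\}$ has dimension at most $l<m$ and that this strict inclusion persists after tensoring with $\Lpne$, which the paper leaves implicit.
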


%%%%positivity
\smallbreak

Finally, we investigate positive controllability and consider
\begin{itemize}
\item the positive cone  $X^+:=\LpneRmp$ in the state space $X$,
%where $1\le p<+\infty$, %oder X wie frueher???
%\item the boundary space $\dX:=\RR_+^m$,
\item the positive cone  $U^+:=\RR_+$ in the control space $U$,
\item a positive matrix $\BB\in\rM_m(\RR_+)$,
\item a positive control operator $B:=b\in\RR_+^m$.
%\item the system operator $\Am:=\diag\bigl(\dds\bigr)_{m\times m}$ with domain $D(\Am):=\WepRmp$,
%\item the boundary operator $Q:D(\Am)\to\RR_+^m$, $Qf:=f(1)-\BB f(0)$,
%\item the operator $A\subset\Am$ with domain $D(A)=\ker Q$.
\end{itemize}
%Formulae \eqref{eq:HG-TE} and \eqref{eq:Dirich-TE} remain valid, while  
Then by \eqref{eq:HG-TE}--\eqref{eq:Dirich-TE} the operators $T(t)\in\sL(X)$ for $t\ge0$ and $\Bl\in\sL(U,X)$ for $\lambda>\gb(A)$ are positive.
Thus arguing as above using Proposition \ref{prop:reach-pos} and Corollary~\ref{cor:Reach_pos} we obtain the following.

\begin{corollary}\label{cor:pos-Reach-TE}
The exact positive reachability space of the controlled transport equation~\eqref{eq:TE} is given by
\begin{equation*}
\sRBCp=\Lpnep\otimes\co\bigl\{\BB^{k}b \;:\; k\in\NN_0\bigr\}.
\end{equation*}
Hence, the problem is exactly positive controllable if and only if
\begin{equation*}
\co\bigl\{\BB^{k}b \;:\; k\in\NN_0\bigr\} = \RR_+^m.
\end{equation*}
\end{corollary}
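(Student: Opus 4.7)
The plan is to apply Corollary~\ref{cor:Reach_pos} with $t=1$ and then take the union over $n\in\NN_1$. The key observation is that by \eqref{eq:HG-TE} the operator $T(1)$ acts on $X$ as pointwise multiplication by the matrix $\BB$, so $T(1)^k$ is multiplication by $\BB^k$. Combined with Lemma~\ref{lem:strange-TE}, which identifies the operator $M$ from Proposition~\ref{prop:range-B1} as $(Mu)(s)=u(s)\,b$, each summand $T(1)^k Mu_k$ appearing in Corollary~\ref{cor:Reach_pos} becomes simply the function $s\mapsto u_k(s)\,\BB^k b$.

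Before invoking that corollary, I would verify the positivity hypotheses: since $\BB\in\rM_m(\RR_+)$ and $b\in\RR_+^m$, formulas \eqref{eq:HG-TE} and \eqref{eq:Dirich-TE} give $T(t)\ge0$ and $\Bl\ge0$ for $\lambda>\gb(A)$, so condition \eqref{eq:pos-R-4} of Proposition~\ref{prop:reach-pos} holds and hence $\sRBCtp\subset X^+$. The last assertion of Corollary~\ref{cor:Reach_pos} then ensures that $M$ is positive. Applying Corollary~\ref{cor:Reach_pos} with $t=1$ therefore yields, for every $n\in\NN_1$, that the exact positive reachability space in time $n$ equals
\[
\Bigl\{\sum_{k=0}^{n-1}u_k(\cdot)\,\BB^k b:u_k\in\Lpnep\Bigr\}.
\]
Since $\sRBCtp$ is monotone in $t$ (a positive control on $[0,t]$ can be preceded by the zero control on $[0,t'-t]$ to reach the same state at any later time $t'>t$), taking the union over $n\ge1$ identifies $\sRBCp$ with the set of all finite nonnegative $\Lpnep$-linear combinations of the $\BB^k b$. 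Distributing any convex combination $v=\sum_i\lambda_i\BB^ib\in\co\{\BB^kb\}$ inside a single tensor $g\otimes v=\sum_i(\lambda_ig)\otimes\BB^ib$ then identifies this reachability space with $\Lpnep\otimes\co\{\BB^kb:k\in\NN_0\}$.

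For the concluding equivalence, the ``if'' direction is immediate: if $\co\{\BB^kb\}=\RR_+^m$, then $\Lpnep\otimes\co\{\BB^kb\}=\Lpnep\otimes\RR_+^m=\LpneRmp=X^+$. For the converse I would test on constant states: for any $v\in\RR_+^m$ the constant function $\eins_{[0,1]}\otimes v$ lies in $X^+=\sRBCp=\Lpnep\otimes\co\{\BB^kb\}$, which forces $v\in\co\{\BB^kb\}$. The main pitfall in the whole argument is keeping the tensor-product notation unambiguous; once one reads $\Lpnep\otimes C$ as the set of finite sums $\sum_jg_j\otimes v_j$ with $g_j\in\Lpnep$ and $v_j\in C$, all identifications reduce to elementary manipulations.
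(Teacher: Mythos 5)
Your argument is correct and follows essentially the route the paper intends: verify positivity of $T(t)$ and $\Bl$ from \eqref{eq:HG-TE}--\eqref{eq:Dirich-TE}, invoke Proposition~\ref{prop:reach-pos} and Corollary~\ref{cor:Reach_pos} with $t=1$, $T(1)=\BB$ and $Mu=u\cdot b$, and take the union over $n$. The only point worth flagging is that your last step (like the statement itself) implicitly reads $\co\{\BB^k b: k\in\NN_0\}$ as the generated convex cone rather than the literal convex hull, which is harmless here because the $\Lpnep$ factor absorbs positive scalars exactly as you observe.
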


\subsection{Vertex control of flows in networks}

The previous example can be easily adapted to cover a transport problem on a network controlled in a single vertex. More precisely, consider a network consisting of $n$ vertices $\{v_1,\dots ,v_n\}$ and $m$ edges $\{e_1,\dots ,e_m\}$. As shown in \cite[Sec.~18.1]{BKR:17}, its structure can be described by 
either the transposed weighted adjacency matrix  
$\AAA\in\MnC$ given by
\begin{equation*}
 \AAA_{ij}:=\begin{cases}
w _{jk} & \mbox{if $v_j\stackrel{e_k}{\longrightarrow}v_i$},  \\
0 & \text{otherwise,}
\end{cases}
\end{equation*}
or by the transposed weighted adjacency matrix of the line graph 
$\BB\in\MmC$ where
\begin{equation*}
\BB_{ij}:=\begin{cases}
w_{ki} & \mbox{if $\stackrel{e_j}{\longrightarrow}v_k \stackrel{e_i}{\longrightarrow}$}\ ,  \\
0 & \text{otherwise.}
\end{cases}
\end{equation*}
To proceed we also need the transposed weighted outgoing incidence matrix
$\Fwmt=:\Psi\in\MmnC$ defined by
\begin{equation*}
    \Psi_{ij} := 
    \begin{cases}
   w_{ij} & \mbox{if $v_j\stackrel{e_i}{\longrightarrow}$}\ , \\
    0 & \text{otherwise}
    \end{cases}
\end{equation*}
and the corresponding unweighted outgoing 
%and incoming 
incidence matrix denoted by $\Fim\in\MnmC$.
For the weights we assume $0\le w_{ij} \le 1$, thus all these matrices are positive. Moreover, we assume that $\Psi$ is column stochastic (i.e., the weights on all the outgoing edges from a given vertex sum up to $1$).
For a detailed account of the various graph matrices we refer to \cite[Sec.~18.1]{BKR:17}. Here we only mention the following relations 
\begin{equation}\label{eq:ABrel}
\Psi\AAA = \BB\Psi,\quad 
\Psi R(\lambda,\AAA) = R(\lambda,\BB)\Psi,\quad 
\text{and}\quad
\Fim\Psi = Id_{\CC^n}
\end{equation}
which we will need in the sequel.

\smallbreak
We then consider a transport equation on the $m$ edges imposing $n$ boundary conditions in the vertices, controlled in a single vertex $v_i$, i.e.,
\begin{alignat}{2}\label{eq:TE-net}
\begin{cases}
\dot x(t,s)=x'(t,s),& s\in[0,1],\ t\ge0,\\
x(t,1)=\BB x(t,0)+u(t)\cdot \Psi v,&t\ge0,\\
x(0,s)=0,& s\in[0,1],
\end{cases}
\end{alignat}
where $x:\RR_+\times[0,1]\to\CC^m$, $u:\RR_+\to\CC$,  and  the vertex $v=v_i$ is represented by the $i$-th canonical basis vector in $\Cn$. To rewrite this equation in an abstract form we take the same state space $X:=\LpneCm$, control space $U:=\CC$  and boundary space $\dX:=\Cm$  as above. Adapting the domain of $\Am$ as 
\[D(\Am):=\l\{f\in\WepneCm: f(1) \in \Rg\Psi \r\}\]
and choosing the control operator $B=b:=\Psi v\in\Cm$  we are in the situation considered in  \cite{EKNS:08} and \cite{BBEAM:13}, see also \cite[Sec.~18.4]{BKR:17}.

Then the \emph{approximate} controllability space  for the network flow problem computed in \cite[Cor.~4.3]{EKNS:08} by our Corollary~\ref{cor:Reach-TE} indeed coincides with the \emph{exact} controllability space.

\begin{corollary}
If $t\ge \min\{m,n\}=:l$ then the exact reachability space of the controlled transport in network problem \eqref{eq:TE-net} equals
\begin{align*}
\sRBCt=\sRBC&=\Lpne\otimes\lin\l\{\Psi v,\BB \Psi v,\ldots,\BB^{l-1}\Psi v\r\}\\
& = \Lpne\otimes \Psi \lin\l\{v,\AAA v,\ldots,\AAA^{l-1} v\r\} .
\end{align*}
\end{corollary}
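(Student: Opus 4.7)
My plan is to invoke the mechanism of Corollary~\ref{cor:Reach-TE} with $b:=\Psi v$ and then refine the threshold on $t$ from $m$ down to $l=\min\{m,n\}$ using the commutation identities \eqref{eq:ABrel}. First I check that the abstract framework of the previous subsection carries over to \eqref{eq:TE-net} despite the modified domain $D(\Am)=\{f\in\WepneCm:f(1)\in\Rg\Psi\}$: the Dirichlet operator $\Ql=\epsl\otimes\RelBB$ still takes values in this smaller domain because by the middle identity of \eqref{eq:ABrel}
\[
\RelBB\,\Psi v=\Psi\,R(\el,\AAA)v\in\Rg\Psi,
\]
while the semigroup formula \eqref{eq:HG-TE}, the $p$-boundary admissibility of $B$, and the operator $Mu=u\otimes b$ from Lemma~\ref{lem:strange-TE} are unchanged. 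Running the proof of Corollary~\ref{cor:Reach-TE} verbatim with $b=\Psi v$ therefore gives, for $t\ge m$,
\[
\sRBCt=\sRBC=\Lpne\otimes\lin\l\{\Psi v,\BB\Psi v,\ldots,\BB^{m-1}\Psi v\r\}.
\]

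Next I rewrite and truncate the Krylov subspace. Iterating the first identity of \eqref{eq:ABrel} yields $\BB^{k}\Psi=\Psi\AAA^{k}$ for all $k\in\NN_0$, and in particular
\[
\lin\l\{\BB^{k}\Psi v:k\in\NN_0\r\}=\Psi\,\lin\l\{\AAA^{k}v:k\in\NN_0\r\}.
\]
The Cayley--Hamilton theorem for $\BB\in\MmC$ already shows that these spans stabilize for $k\ge m-1$. What is new in the network setting is that the Cayley--Hamilton theorem for the (possibly smaller) matrix $\AAA\in\MnC$, transported by $\Psi$, also gives
\[
\BB^{n}\Psi v=\Psi\AAA^{n}v\in\Psi\,\lin\l\{v,\AAA v,\ldots,\AAA^{n-1}v\r\}=\lin\l\{\Psi v,\BB\Psi v,\ldots,\BB^{n-1}\Psi v\r\},
\]
so the spans are stationary already for $k\ge n-1$. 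Hence they coincide with their truncation at index $l-1$. Exactly as in the passage from Corollary~\ref{cor:Reach-TE} to Remark~\ref{rem:C-H}, this sharper Cayley--Hamilton-type relation shows that the reachability space is already reached at time $t=l$ (rather than only at $t=m$), which gives the first equality in the statement; the second equality is then immediate from the displayed identity above.

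The only non-cosmetic step is the improvement of the threshold on $t$ from the naive $m$ down to $l=\min\{m,n\}$. This is precisely where the graph structure genuinely enters: when $n<m$, the Cayley--Hamilton relation for the smaller matrix $\AAA$, pulled back through the intertwining $\BB\Psi=\Psi\AAA$, forces stabilization of the Krylov sequence $(\BB^{k}\Psi v)_k$ earlier than the Cayley--Hamilton relation for $\BB$ itself would, yielding the sharp bound $l$.
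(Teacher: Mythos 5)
Your proposal is correct and follows the paper's intended route: the paper leaves this corollary without an explicit proof, presenting it as a direct consequence of Corollary~\ref{cor:Reach-TE} applied with $b=\Psi v$, and your argument is exactly the fleshing-out of that — including the two points that actually need checking, namely that $\Ql\Psi v=\epsl\otimes\Psi R(\el,\AAA)v$ respects the restricted domain via \eqref{eq:ABrel}, and that the intertwining $\BB^k\Psi=\Psi\AAA^k$ together with Cayley--Hamilton for $\AAA\in\MnC$ lowers the stabilization index (and hence the time threshold, as in Remark~\ref{rem:C-H}) from $m$ to $\min\{m,n\}$. No gaps.
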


Note that in big connected networks one usually has $n\le m$, hence the latter space is the more important one for applications.

%%%%positivity
\smallbreak

Positive control for this problem was already treated in \cite{BBEAM:13} and the approximate positive reachability spaces was computed. However, our approach even yields the \emph{exact} reachability space.

\begin{corollary}
The exact positive reachability space of the controlled transport in network problem \eqref{eq:TE-net} is given by
\begin{align*}
\sRBCp&=\Lpnep\otimes\co\l\{\BB^{k} \Psi v : k\in\NN_0\r\}\\
& = \Lpnep\otimes \Psi \co\l\{\AAA^k v : k\in\NN_0\r\}.
\end{align*}
\end{corollary}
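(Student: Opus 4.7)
The plan is to reduce this corollary to the analogous Corollary~\ref{cor:pos-Reach-TE} for the transport equation, since the network problem \eqref{eq:TE-net} is precisely that equation specialized to the positive control vector $b := \Psi v \in \RR_+^m$. First I would check that the positivity framework of the previous subsection transfers verbatim: the semigroup $\Ttt$ given by \eqref{eq:HG-TE} is positive because $\BB \in \rM_m(\RR_+)$; the Dirichlet operator $\Ql = \epsl \otimes \RelBB$ from \eqref{eq:Dirich-TE} is positive for $\lambda$ large enough, since $\RelBB = \sum_{k \ge 0} e^{-\lambda(k+1)} \BB^k$ is a convergent sum of positive matrices once $\el$ exceeds the spectral radius of $\BB$; and the control operator $B = \Psi v$ is positive because $\Psi$ has nonnegative entries and $v$ is a canonical basis vector.

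Next I would invoke Corollary~\ref{cor:pos-Reach-TE} with $b$ replaced by $\Psi v$. Lemma~\ref{lem:strange-TE}, applied with this new control vector, still identifies the operator $M$ of Proposition~\ref{prop:range-B1} as multiplication by $\Psi v$. Combined with the identity $T(1) f = \BB f$ from \eqref{eq:HG-TE}, which under iteration gives $T(1)^k M u_k = u_k(\cdot) \cdot \BB^k \Psi v$, this produces the first equality
\[
\sRBCp = \Lpnep \otimes \co\bigl\{\BB^k \Psi v : k \in \NN_0\bigr\}.
\]

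For the second equality I would invoke the commutation relation $\BB \Psi = \Psi \AAA$ from \eqref{eq:ABrel}. A straightforward induction gives $\BB^k \Psi = \Psi \AAA^k$ for every $k \in \NN_0$, and since $\Psi$ is linear and hence commutes with convex combinations,
\[
\co\bigl\{\BB^k \Psi v : k \in \NN_0\bigr\} = \Psi \, \co\bigl\{\AAA^k v : k \in \NN_0\bigr\}.
\]

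The main obstacle I anticipate is not any of these individual steps but rather the preliminary check that the abstract framework of Section~\ref{TAF} genuinely applies to the network operator $\Am$ with its restricted domain $D(\Am) = \{f \in \WepneCm : f(1) \in \Rg \Psi\}$. In particular one must verify that $Q$ remains surjective onto the chosen boundary space $\dX = \Cm$, that the Dirichlet formula \eqref{eq:Dirich-TE} still yields an operator whose range lies inside the restricted domain when the boundary datum lies in $\Rg \Psi$, and that the operator $M$ of Proposition~\ref{prop:range-B1} is still multiplication by $\Psi v$ in this modified setting. These checks rely on the graph identities in \eqref{eq:ABrel} but are essentially routine once the structure is unpacked, and once they are in place the remainder of the argument proceeds identically to the transport equation case.
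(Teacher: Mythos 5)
Your proposal matches the paper's (implicit) argument: the corollary is obtained exactly by specializing Corollary~\ref{cor:pos-Reach-TE} to $b=\Psi v$ (after noting, via the cited references, that the framework and positivity hypotheses persist for the adapted domain of $\Am$) and then converting $\co\{\BB^k\Psi v\}$ into $\Psi\co\{\AAA^k v\}$ using the intertwining relation $\Psi\AAA=\BB\Psi$ from \eqref{eq:ABrel}. Your extra care about surjectivity of $Q$ on the restricted domain is a reasonable flag, but the route is the same as the paper's.
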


\subsection{Exact Boundary Controllability for Flows in Networks with Dynamical Boundary Conditions}

In this subsection we investigate exact controllability in the situation of \cite[Sect.~3]{EKKNS:10}. Without going much into details we only introduce the necessary facts to state the problem and to compute the exact reachability space $\sRBCt$. 

\smallskip
We start from the transport problem in the network introduced in the previous example, but now change the transmission process in the vertices allowing for dynamical boundary conditions. 
To encode the structure of the underlying network and the imposed boundary conditions we use the incidence matrices introduced above as well as the weighted  incoming incidence matrix $\Fiwp$ given by
\[
\bigl(\Fiwp\bigr)_{ij}:=
  \begin{cases}
   w^+_{ij} & \mbox{if $\stackrel{e_j}{\longrightarrow}v_i$}\ , \\
    0 & \text{otherwise,}
    \end{cases}
\]
for some $0\le w^+_{ij} \le 1$. Defining
\begin{equation}\label{eq:ABrel2}
\AAA := \Phi_w^+\Psi  \quad\text{and}\quad \BB:=\Psi\Phi_w^+
\end{equation}
we obtain the adjacency matrices as above (with different nonzero weights).
We mention that the relations \eqref{eq:ABrel} remain valid also in this case.
\smallbreak

We are then interested in the network transport problem with dynamical boundary conditions in $s=1$ considered already in \cite{Sik:05} and \cite[Sect.~3]{EKKNS:10}, i.e.,
\begin{alignat}{2}\label{eq:TE-net-db}
\begin{cases}
\dot x(t,s)=x'(t,s),& s\in[0,1],\ t\ge0,\\
\dot x(t,1)=\BB x(t,0)+u(t)\cdot \Psi v,&t\ge0,\\
x(0,s)=0,& s\in[0,1],\\
\Phi^- x(1,0) = 0.
\end{cases}
\end{alignat}

To embed this example in our setting we introduce
\begin{itemize}
\item the state space $X:=\LpneCm\times\Cn$ where $1\le p<+\infty$,
\item the boundary space $\dX:=\Cm$,
\item the control space $U:=\CC$,
\item the control operator $B:=\Psi v\in\Cm\simeq\sL(U,\dX)=\sL(\CC,\Cm)$
where $v=v_i$ denotes the $i$-th canonical basis vector of $\Cn$ meaning that the control acts in the $i$-th vertex of the network,
\item the system operator\footnote{By $\delta_s$ we denote the point evaluation in $s\in[0,1]$, i.e., $\delta_s(f)=f(s)$.}
\begin{align*}
\Am:&=\begin{pmatrix}
\diag\bigl(\dds\bigr)_{m\times m}&0\\\Fiwp\delta_0&0
\end{pmatrix}
\quad\text{with domain}\\
D(\Am):&=\l\{\tbinom fd\in\WepneCm\times\Cn:f(1)\in\Rg\Psi\r\},
\end{align*}
\item  the boundary operator $Q:D(\Am)\times\Cn\to\Cm$, $Q\binom fd:=\Fim f(1)-d$,
\item  the operator $A\subset\Am$ with domain $D(A)=\ker Q$.
\end{itemize}

As is shown in \cite[Prop.~3.4]{EKKNS:10} these spaces and operators satisfy all assumptions of Section~\ref{TAF}.
To proceed we first need to compute the associated Dirichlet operator $\Ql$ and an explicit representation of the semigroup operators $T(t)$ for $t\in[0,1]$.

\begin{lemma}
\makeatletter
\hyper@anchor{\@currentHref}%
\makeatother
\label{lem:Diri-SGR-dBC}
\begin{enumerate}[(i)]
\item For each $0\neq \lambda\in\rho(A)$, the Dirichlet operator $Q_{\lambda}\in \sL(\CC^n, {X})$ is given by
\begin{equation}\label{Ql}
   Q_{\lambda} =
   \binom{
   \lambda \ep_{\lambda}\otimes\Psi \RlelAA}
   {\AAA \RlelAA}.
\end{equation}
\item The semigroup $\Ttt$ generated by $A$ is given by\footnote{We use the notations $\bigl[\binom fd\bigr]_1:=f$ and $\bigl[\binom fd\bigr]_2:=d$ for the canonical projections of $\binom fd\in X$.}
\begin{align}
\l[T(t)\tbinom fd\r]_1(s)&=
\begin{cases}\label{eq:[T(t)]1}
f(t+s)&\text{if }\kern22pt 0\le t<1-s,\\
\BB\, V_{t+s-1} f +\Psi d&\text{if }\,1-s\le t\le1,
\end{cases}
\\
\l[T(t)\tbinom fd\r]_2&=\Fiwp\, V_t f +d\kern45pt\text{for }0\le t\le1,\label{eq:[T(t)]2}
\end{align}
where 
\begin{equation}\label{def:V_s}
V_s f:=\int_0^s f(r)\dr\quad \text{for }f\in\LpneCm.
\end{equation} 
\end{enumerate}
\end{lemma}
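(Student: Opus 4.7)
\textbf{Part (i).} The plan is to parametrise $\ker(\lambda-\Am)$ explicitly and then invert the restriction $Q|_{\ker(\lambda-\Am)}$. Solving $(\lambda-\Am)\binom{f}{d}=0$ componentwise yields $f(s)=\els f(0)$ together with $d=\lambda^{-1}\Fiwp f(0)$, where the hypothesis $\lambda\neq0$ enters. Writing $f(0)=\Psi y$ with $y\in\Cn$ (as allowed by the range condition $f(1)\in\Rg\Psi$ combined with $f(1)=\el f(0)$) and using $\AAA=\Fiwp\Psi$ from \eqref{eq:ABrel2} gives $d=\lambda^{-1}\AAA y$. Applying $Q$ and exploiting $\Fim\Psi=\Id$ from \eqref{eq:ABrel} then produces
\[
Q\binom{f}{d}=\Fim f(1)-d=\el y-\lambda^{-1}\AAA y=\lambda^{-1}\bigl(\lambda\el-\AAA\bigr)y.
\]
Bijectivity of $Q|_{\ker(\lambda-\Am)}$ from Lemma~\ref{lem-Gre}.(ii) forces $\lambda\el-\AAA$ to be invertible, and solving for $y$ in terms of $c:=Q\binom{f}{d}$ gives $y=\lambda\,\RlelAA\,c$. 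Substituting back into the expressions for $f$ and $d$ yields exactly \eqref{Ql}.

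\textbf{Part (ii).} Since $A$ is already known to generate a $C_0$-semigroup by \cite[Prop.~3.4]{EKKNS:10}, it suffices to verify that \eqref{eq:[T(t)]1}--\eqref{eq:[T(t)]2} describe the unique classical solution of $\dot x(t)=Ax(t)$, $x(0)=\binom{f}{d}\in D(A)$, for $t\in[0,1]$. The second component satisfies the ODE $\dot d(t)=\Fiwp[x(t)]_1(0)$; since by the method of characteristics $[x(t)]_1(0)=f(t)$ for $t\in[0,1]$, integration gives \eqref{eq:[T(t)]2}. For the first component, characteristics propagating in the direction of increasing $s$ yield $[x(t)]_1(s)=f(t+s)$ whenever $t+s\le 1$; when $t+s>1$ the characteristic meets the boundary $s=1$ at time $t':=t+s-1\in(0,t]$, so $[x(t)]_1(s)=[x(t')]_1(1)$. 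The range condition from $D(\Am)$ and the $\ker Q$ condition, together with $\Fim\Psi=\Id$, give $[x(t')]_1(1)=\Psi\Fim[x(t')]_1(1)=\Psi[x(t')]_2$; substituting the already proved formula for $[x(t')]_2$ and using $\BB=\Psi\Fiwp$ from \eqref{eq:ABrel2} then gives $\BB V_{t+s-1}f+\Psi d$, as required.

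\textbf{Main obstacle.} The nontrivial point is the seam $t+s=1$ in \eqref{eq:[T(t)]1}, where the top branch evaluates to $f(1)$ and the bottom branch to $\Psi d$. Their agreement, together with the persistence of $x(t)$ in $D(A)$ for each $t\in[0,1]$, both reduce to the single identity $[x(t)]_1(1)=\Psi[x(t)]_2$, which is immediate from the three graph-theoretic relations $f(1)\in\Rg\Psi$, $\Fim f(1)=d$, and $\Fim\Psi=\Id$. Once this bookkeeping is in place, strong continuity and uniqueness complete the proof by standard arguments.
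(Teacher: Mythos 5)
Your proposal is correct, but it takes a genuinely different (and more self-contained) route than the paper: the paper proves nothing directly here, instead delegating part (i) to \cite[Prop.~3.8]{EKKNS:10}, formula \eqref{eq:[T(t)]2} to the proof of \cite[Prop.~3.4.(iii)]{EKKNS:10}, and formula \eqref{eq:[T(t)]1} to \cite[Lem.~6.1]{Sik:05}. You instead verify everything from scratch: for (i) you parametrise $\ker(\lambda-\Am)$ by $f(0)=\Psi y$ (legitimate, since $f(1)=\el f(0)\in\Rg\Psi$ forces $f(0)\in\Rg\Psi$ and $\Fim\Psi=\Id$ makes $\Psi$ injective), reduce $Q|_{\ker(\lambda-\Am)}$ to the matrix $\lambda^{-1}(\lambda\el-\AAA)$ on $\CC^n$, and invert; this computation is correct, correctly uses $\lambda\neq0$ and $\AAA=\Fiwp\Psi$, and reproduces \eqref{Ql}. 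For (ii) your characteristics argument is sound: the identity $[x(t)]_1(1)=\Psi[x(t)]_2$ on $D(A)$ (from $f(1)\in\Rg\Psi$, $\Fim f(1)=d$ and $\Fim\Psi=\Id$) is indeed the one fact that both closes the seam at $t+s=1$ and converts the boundary value into $\Psi$ times the already-computed second component, whence $\BB V_{t+s-1}f+\Psi d$ via $\BB=\Psi\Fiwp$. What your approach buys is a proof readable without the two external references and an explicit display of where the graph relations \eqref{eq:ABrel}--\eqref{eq:ABrel2} enter; what it costs is that you must (and do) invoke the generation result of \cite[Prop.~3.4]{EKKNS:10} plus uniqueness of classical solutions to upgrade a verification on $D(A)$ to the semigroup formula on all of $X$ by density, a step the paper's citations absorb silently.
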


\begin{proof}
Assertion~(i) is proved in \cite[Prop.~3.8]{EKKNS:10}. Equation~\eqref{eq:[T(t)]2} is shown in the proof of \cite[Prop.~3.4.(iii)]{EKKNS:10}.  The statement \eqref{eq:[T(t)]1} for the first coordinate then follows from \cite[Lem.~6.1]{Sik:05}.
\end{proof}

Next  we apply Proposition~\ref{prop:range-B1} to the present situation.

\begin{lemma}\label{lem:strange-dBC}
Let $\lambda\in\rho(A)$. Then for all $0\le\alpha\le1$
\begin{align}\label{eq:strange-dBC}
\bigl[\ela\cdot T(1-\alpha)\Bl \bigr]_1(s)&=
\begin{cases}
\lambda\epsl(1+s)\cdot\Psi\RlelAA\, v&\text{if }0\le s<\alpha,\\
\lambda\epsl(1+s)\cdot\Psi\RlelAA\, v-\epsl(s)\cdot \Psi v&\text{if }\alpha\le s\le1.
\end{cases}
\\
\bigl[\ela\cdot T(1-\alpha)\Bl \bigr]_2
&=\el\AAA \RlelAA\, v
\end{align}
Hence the equality in \eqref{eq:strange1} is satisfied with
\begin{align*}
M&=\binom{\Psi v}{0}\in\sL\Bigl(\Lpne,\LpneCm\times \Cn\Bigr),\
(M u)(s)=\binom{ u(s)\cdot\Psi v}{0}.
\end{align*}
\end{lemma}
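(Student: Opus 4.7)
My plan is to compute $e^{\lambda\alpha}T(1-\alpha)\Bl$ coordinate-wise from the explicit formulas in Lemma~\ref{lem:Diri-SGR-dBC}, and then obtain \eqref{eq:strange1} and the operator $M$ by taking the difference $\bigl(e^{\lambda\beta}T(1-\beta)-e^{\lambda\alpha}T(1-\alpha)\bigr)\Bl$. First I would substitute $B=\Psi v$ into \eqref{Ql} to write $\Bl$ with first coordinate $\lambda\epsl\otimes\Psi\RlelAA v$ and second coordinate $\AAA\RlelAA v$. Applying \eqref{eq:[T(t)]1}--\eqref{eq:[T(t)]2} with $t=1-\alpha$ then reduces everything to the action of translation and of $V_\cdot$ on the exponential profile $\lambda\epsl$.

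For the first coordinate on $0\le s<\alpha$ the semigroup acts by pure translation, producing $\lambda e^{\lambda(1-\alpha+s)}\Psi\RlelAA v$, which multiplied by $e^{\lambda\alpha}$ gives the first line of \eqref{eq:strange-dBC}. For $\alpha\le s\le 1$ the output is $(e^{\lambda(s-\alpha)}-1)\BB\Psi\RlelAA v+\Psi\AAA\RlelAA v$; the constant $-\BB\Psi\RlelAA v$ cancels against $\Psi\AAA\RlelAA v$ via $\Psi\AAA=\BB\Psi$ from \eqref{eq:ABrel}, leaving $e^{\lambda(s-\alpha)}\BB\Psi\RlelAA v$. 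Using $\BB\Psi=\Psi\AAA$ once more together with the resolvent identity $\AAA\RlelAA=\lambda\el\RlelAA-\Id$ rewrites this as $\lambda e^{\lambda(1+s-\alpha)}\Psi\RlelAA v-e^{\lambda(s-\alpha)}\Psi v$, and multiplying by $e^{\lambda\alpha}$ produces the second line. The second coordinate follows by the same mechanism, now with $\Phi_w^+\Psi=\AAA$ from \eqref{eq:ABrel2} replacing the first identity; the integral term $\Phi_w^+ V_{1-\alpha}(\lambda\epsl\otimes\Psi\RlelAA v)=(e^{\lambda(1-\alpha)}-1)\AAA\RlelAA v$ combines with the constant $d=\AAA\RlelAA v$ to give $e^{\lambda(1-\alpha)}\AAA\RlelAA v$, yielding $e^\lambda\AAA\RlelAA v$ after multiplication by $e^{\lambda\alpha}$.

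To obtain \eqref{eq:strange1} I would then form the difference of these formulas at $\beta$ and at $\alpha$. The second coordinates are identical and cancel; in the first coordinate the term $\lambda\epsl(1+s)\Psi\RlelAA v$ is present for every $s$ in both formulas and thus also cancels. A case analysis on $s\in[0,\alpha)$, $[\alpha,\beta)$, and $[\beta,1]$ then shows that what remains is precisely $\epsl(s)\Psi v$ on $[\alpha,\beta)$ and zero elsewhere, which is exactly $\bigl(M(\epsl\cdot\eab\otimes v)\bigr)(s)$ with $Mu=\binom{u\cdot\Psi v}{0}$.

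The main obstacle is the second case of the first coordinate, where the correct combined use of the algebraic identities $\Psi\AAA=\BB\Psi$ (applied twice) and the resolvent identity $\AAA\RlelAA=\lambda\el\RlelAA-\Id$ is needed to collapse $\BB\Psi\RlelAA v$ into a linear combination of $\Psi\RlelAA v$ and $\Psi v$; without this step the $\lambda$-dependent prefactors would not cancel when the difference is formed, and the operator $M$ would not be recovered in the claimed $\lambda$-independent form. Everything else is routine substitution and bookkeeping.
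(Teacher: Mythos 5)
Your computation is correct and follows essentially the same route as the paper's proof: coordinate-wise evaluation of $\ela T(1-\alpha)\Bl$ via the explicit formulas of Lemma~\ref{lem:Diri-SGR-dBC}, with the identities $\Psi\AAA=\BB\Psi$, $\Fiwp\Psi=\AAA$ and $\AAA\RlelAA=\lambda\el\RlelAA-\Id$ collapsing the second case exactly as in the text. The only difference is that you spell out the final difference $\bigl(\elb T(1-\beta)-\ela T(1-\alpha)\bigr)\Bl v$ and the case analysis on $[0,\alpha)$, $[\alpha,\beta)$, $[\beta,1]$ identifying $M$, which the paper leaves implicit.
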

\begin{proof}
Using the explicit representations of $\Ql$ and $T(t)$ given in Lemma~\ref{lem:Diri-SGR-dBC} and the relations \eqref{eq:ABrel} we obtain
\begin{align*}
\bigl[\ela\cdot T(1&-\alpha)\Bl \bigr]_1(s)=\\
&=\ela\cdot
\begin{cases}
\lambda\epsl(1-\alpha+s)\cdot\Psi\RlelAA\, v&\text{if }0\le s<\alpha,\\
\lambda\BB\, V_{s-\alpha}\,\epsl\cdot\Psi\RlelAA\, v+\Psi\AAA \RlelAA v&\text{if }\alpha\le s\le1,
\end{cases}
\\
&=\ela\cdot
\begin{cases}
\lambda\epsl(1-\alpha+s)\cdot\Psi\RlelAA\, v&\text{if }0\le s<\alpha,\\
\bigl(\epsl(s-\alpha)-1\bigr)\cdot\Psi\,\AAA\RlelAA\, v+\Psi\AAA \RlelAA v&\text{if }\alpha\le s\le1,
\end{cases}
\\
&=
\begin{cases}
\lambda\epsl(1+s)\cdot\Psi\RlelAA\, v&\text{if }0\le s<\alpha,\\
\epsl(s)\cdot\Psi\bigl(\lambda\el\RlelAA-Id\bigr)\, v&\text{if }\alpha\le s\le1.
\end{cases}
\\
&=\begin{cases}
\lambda\epsl(1+s)\cdot\Psi\RlelAA\, v&\text{if }0\le s<\alpha,\\
\lambda\epsl(1+s)\cdot\Psi\RlelAA\, v-\epsl(s)\cdot \Psi v&\text{if }\alpha\le s\le1.
\end{cases}
\end{align*}
Similarly, for the second coordinate we have 
\begin{align*}
\bigl[\ela\cdot T(1-\alpha)\Bl \bigr]_2
&=\ela\Bigl(\lambda\Fiwp\,V_{1-\alpha}\,\epsl\cdot\Psi\RlelAA\,v+\AAA \RlelAA\, v\Bigr)\\
&=\ela\Bigl(\bigl(\epsl(1-\alpha)-1\bigr)\cdot\AAA\RlelAA\,v+\AAA \RlelAA\, v\Bigr)\\
&=\el\AAA \RlelAA\, v,%.\qedhere
\end{align*}
where we used \eqref{eq:ABrel2}.
\end{proof}

We note that by \cite[Prop.~3.5]{EKKNS:10} the states of the controlled flow at time $t\ge0$ are given by the first coordinate of the states in our ``extended'' state space $X=\LpneCm\times\Cn$. For this reason we also need to compute the first coordinate of $T(1)^k\binom{\Psi g}{0}$.

\begin{lemma}
We have
\begin{equation*}
\l(T(1)\tbinom fd \r)(s)=
\begin{pmatrix}
\Psi\,\Fiwp\, V_s&\Psi\\\kern10pt \Fiwp\,V_1&Id
\end{pmatrix}
\binom{f}{d}
=\binom{\BB\,V_s f+\Psi d}{\Fiwp\, V_1 f +d},
\end{equation*}
where the operator $V_s\in\sL\bigl(\LpneCm,\WepneCm\bigr)$ is defined in \eqref{def:V_s}.
Moreover, for $k\in\NN_1$ we have
\begin{equation}\label{eq:T(k)-dBC}
\l[T(1)^k\tbinom{\Psi g}0\r]_1 (s)
=\Psi(\AAA V_s +\delta_1)^{k-1}\AAA\,V_sg
=(\BB V_s+\delta_1)^{k-1}\BB\Psi\,V_sg.
\end{equation}
\end{lemma}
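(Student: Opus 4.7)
The plan is to derive the matrix representation of $T(1)$ by specializing Lemma~\ref{lem:Diri-SGR-dBC}(ii) to $t=1$, and then establish the iteration formula \eqref{eq:T(k)-dBC} by induction on $k\in\NN_1$. For the first claim, since every $s\in[0,1]$ satisfies $1-s\le 1$, the second branch of \eqref{eq:[T(t)]1} applies at $t=1$ and yields $\l[T(1)\tbinom{f}{d}\r]_1(s)=\BB V_s f+\Psi d$, while \eqref{eq:[T(t)]2} at $t=1$ gives $\l[T(1)\tbinom{f}{d}\r]_2=\Fiwp V_1 f+d$. Assembling these into a $2\times 2$ matrix of operators produces the asserted representation of $T(1)$.

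For the iteration formula, set $\tbinom{F_k}{D_k}:=T(1)^k\tbinom{\Psi g}{0}$. By the first part these obey the recursion
\begin{align*}
F_{k+1}(s)&=\BB V_s F_k+\Psi D_k,\\
D_{k+1}&=\Fiwp V_1 F_k+D_k.
\end{align*}
To close the induction I would carry along the pair of hypotheses
\[
F_k(s)=\Psi\AAA(\AAA V_s+\delta_1)^{k-1}V_s g,\qquad D_k=\AAA(\AAA V_s+\delta_1)^{k-1}V_s g\,\big|_{s=1},
\]
with $\AAA V_s+\delta_1$ interpreted as the operator on $\Cn$-valued functions on $[0,1]$ sending $h$ to the function $s\mapsto\AAA\int_0^s h(r)\dr+h(1)$. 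The base case $k=1$ reduces immediately using $\BB\Psi=\Psi\AAA$ from \eqref{eq:ABrel} and $\AAA=\Fiwp\Psi$ from \eqref{eq:ABrel2}.

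The inductive step rests on four elementary intertwinings: $\Psi V_s=V_s\Psi$ and $\Psi\delta_1=\delta_1\Psi$, valid because $\Psi$ is a constant matrix commuting with integration and point evaluation, together with $\Psi\AAA=\BB\Psi$ and $\Fiwp\Psi=\AAA$ from \eqref{eq:ABrel} and \eqref{eq:ABrel2}. These combine into $\Psi(\AAA V_s+\delta_1)=(\BB V_s+\delta_1)\Psi$ and $(\AAA V_s+\delta_1)\AAA=\AAA(\AAA V_s+\delta_1)$. Substituting the inductive hypothesis into the recursion and applying these identities mechanically yields both the formula for $F_{k+1}$ and the auxiliary identity for $D_{k+1}$ with index raised by one; in particular, $\BB V_s F_k+\Psi D_k$ rearranges into $\Psi\AAA(\AAA V_s+\delta_1)^k V_s g$. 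The second equality in \eqref{eq:T(k)-dBC} then drops out by iterating $\Psi(\AAA V_s+\delta_1)=(\BB V_s+\delta_1)\Psi$ to push $\Psi$ all the way through the power, followed by one last application of $\Psi\AAA=\BB\Psi$.

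The main obstacle is notational rather than mathematical: one must fix once and for all the interpretation of $(\AAA V_s+\delta_1)^{k-1}$ as a composition on $\Cn$-valued functions followed by evaluation at $s$, and then apply the four intertwining identities in the correct order at each step. The algebra itself is elementary, but the bookkeeping between the $\LpneCm$-valued first coordinate and the $\Cn$-valued second coordinate is fragile without the auxiliary formula for $D_k$ carried alongside.
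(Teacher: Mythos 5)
Your proof is correct, and it takes a slightly different route through the induction than the paper does. For the first claim both arguments are identical: at $t=1$ the second branch of \eqref{eq:[T(t)]1} applies for every $s\in[0,1]$, and $\BB=\Psi\Fiwp$ gives the matrix form. For the iteration the paper's key observation is that $\Psi$ is left invertible with left inverse $\Fim$, whence $\bigl[T(1)\tbinom fd\bigr]_2=\Fim\delta_1\bigl[T(1)\tbinom fd\bigr]_1$; on $\Rg T(1)$ one may therefore write $f=\Psi h$, $d=\delta_1 h$, discard the second coordinate entirely, and observe that $T(1)$ acts on the first coordinate as $f\mapsto(\BB V_s+\delta_1)f$, after which the induction is a one-line composition. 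You instead keep both coordinates and carry the auxiliary hypothesis $D_k=\delta_1\psi_k$ alongside $F_k=\Psi\psi_k$, where $\psi_k(s)=\AAA(\AAA V_s+\delta_1)^{k-1}V_sg$, closing the recursion with the intertwinings $\Psi(\AAA V_s+\delta_1)=(\BB V_s+\delta_1)\Psi$ and $(\AAA V_s+\delta_1)\AAA=\AAA(\AAA V_s+\delta_1)$; I checked that both the base case and the step for $F_{k+1}$ and $D_{k+1}$ go through. The two arguments are really the same fact seen from two sides: your auxiliary identity $D_k=\delta_1\psi_k=\Fim\delta_1 F_k$ is exactly the paper's structural observation about $\Rg T(1)$, made explicit and verified inductively rather than derived once from the left-invertibility of $\Psi$. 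The paper's version is shorter and isolates the reusable fact that the first-coordinate dynamics on $\Rg T(1)$ is $(\BB V_s+\delta_1)$; yours is more self-contained and works natively in the $\AAA$-form, obtaining the $\BB$-form at the end by pushing $\Psi$ through the power, whereas the paper proves the $\BB$-form and converts via $\Psi\AAA=\BB\Psi$. You are also right to flag the interpretation of $(\AAA V_s+\delta_1)^{k-1}$ as a composition on $\Cn$-valued functions followed by evaluation; that convention is what the paper relies on in Corollary~\ref{cor:Reach-dBC}.
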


\begin{proof}
The formula for $T(1)$ follows immediately from Lemma~\ref{lem:Diri-SGR-dBC}.(ii). Since $\Psi\AAA = \BB\Psi$, it suffices to show the second equality in \eqref{eq:T(k)-dBC}. Obviously this equation holds for $k=1$. To verify it for $k>1$ we note that by \eqref{eq:ABrel} the matrix $\Psi$ is left invertible with left inverse $\Fim$. Hence, we obtain
\[
\l[T(1)\tbinom fd\r]_2=\Fim\delta_1 \l[T(1)\tbinom fd\r]_1.
\]
If $\binom fd\in\Rg T(1)$ we can write $f=\Psi h$ and the previous equation implies
\begin{align*}
\l[T(1)\tbinom fd\r]_1 (s)
=\l[T(1)\tbinom {\Psi h}{\delta_1h}\r]_1(s)
=\BB V_s\Psi h+\Psi\delta_1 h
=(\BB V_s +\delta_1)f.
\end{align*}
Now assume that \eqref{eq:T(k)-dBC} holds for some $k\ge1$. Then for $\binom fd=T(1)^k\binom{\Psi g}{0}\in\Rg T(1)$ we conclude
\begin{align*}
\l[T(1)^{k+1}\tbinom{\Psi g}0\r]_1(s)
&=\l[T(1)\cdot T(1)^{k}\tbinom{\Psi g}0\r](s)\\
&=(\BB V_s +\delta_1)\cdot(\BB V_s+\delta_1)^{k-1}\,\BB\Psi\,V_sg\\
&=(\BB V_s+\delta_1)^{k}\,\BB\Psi\,V_s g.\qedhere
\end{align*}

\end{proof}

The previous two lemmas together with Corollary~\ref{cor:Bn} imply the following result.

\begin{corollary}\label{cor:Bn-dBC}
For $l\in\NN_2$ and $u\in\Lpnl$ we have
\begin{align}
\bigl[\sBl u\bigr]_1(s)
&=\Psi\biggl( u_0\otimes v+\sum_{k=1}^{l-1}\bigl(\AAA V_s+\delta_1\bigr)^{k-1}\, V_s(u_{k}\otimes\AAA v)\biggr) \nonumber\\
&=u_0\otimes \Psi v+\sum_{k=1}^{l-1}\bigl(\BB V_s+\delta_1\bigr)^{k-1}\,V_s(u_{k}\otimes\BB\Psi\, v)\label{eq:edBC}
\end{align}
where $u_k\in\Lpne$ is defined as in \eqref{eq:def-uk}.
\end{corollary}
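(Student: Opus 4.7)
The plan is to combine three ingredients already established in this section: Corollary~\ref{cor:Bn} (which decomposes the controllability map over unit time intervals), Lemma~\ref{lem:strange-dBC} (which supplies the explicit form of the operator $M$), and the iteration formula \eqref{eq:T(k)-dBC} for $T(1)^k$ acting on elements of the form $\binom{\Psi g}{0}$. Applying Corollary~\ref{cor:Bn} with $t=1$ and $n=l$ gives
\[
\sBl u = \sum_{k=0}^{l-1} T(1)^k M u_k,
\]
and by Lemma~\ref{lem:strange-dBC} we have $M u_k = \binom{u_k\otimes\Psi v}{0}$. Taking first coordinates, the $k=0$ term contributes $[M u_0]_1 = u_0\otimes\Psi v = \Psi(u_0\otimes v)$, which is precisely the term sitting outside the sum in \eqref{eq:edBC}.

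For $k\ge1$ I would apply \eqref{eq:T(k)-dBC} with $g := u_k\otimes v$, so that $\Psi g = u_k\otimes\Psi v$ matches the first coordinate of $M u_k$. This yields
\[
\bigl[T(1)^k M u_k\bigr]_1(s)
= \Psi(\AAA V_s+\delta_1)^{k-1}\AAA V_s(u_k\otimes v)
= (\BB V_s+\delta_1)^{k-1}\BB\Psi V_s(u_k\otimes v).
\]
Summing over $0\le k\le l-1$ and rewriting $\AAA V_s(u_k\otimes v) = V_s(u_k\otimes\AAA v)$ (since $\AAA$ is a constant matrix and therefore commutes with the Volterra operator $V_s$) gives the first equality in \eqref{eq:edBC}. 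The second equality follows by pushing $\Psi$ into each summand: using that $\Psi$ commutes with both $V_s$ and $\delta_1$ (they act trivially on the matrix factor $\Psi$), together with the intertwining $\Psi\AAA = \BB\Psi$ from \eqref{eq:ABrel}, one inductively obtains $\Psi(\AAA V_s+\delta_1)^{k-1}\AAA = (\BB V_s+\delta_1)^{k-1}\BB\Psi$.

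I do not anticipate any substantial obstacle, as the argument reduces to concatenating three identities already at our disposal. The only delicate point is keeping the interpretation of the expression $(\AAA V_s+\delta_1)^{k-1}$ straight: $V_s$ is the Volterra integration operator in the spatial variable $s$, $\AAA$ acts pointwise as a constant matrix, and $\delta_1$ is evaluation at $s=1$; the $(k{-}1)$-fold composition must be read as an iterated operator acting on the $\CC^n$-valued (respectively $\CC^m$-valued) function $\AAA V_s(u_k\otimes v)$ (respectively $\BB\Psi V_s(u_k\otimes v)$). Once this bookkeeping is in place, both equalities in \eqref{eq:edBC} reduce to the algebraic identities above and the proof is complete.
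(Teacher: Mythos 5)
Your proposal is correct and follows exactly the route the paper intends: the paper's own justification is the single sentence ``The previous two lemmas together with Corollary~\ref{cor:Bn} imply the following result,'' and you have filled in precisely those details --- applying Corollary~\ref{cor:Bn} with $t=1$, $n=l$, identifying $Mu_k=\binom{u_k\otimes\Psi v}{0}$ from Lemma~\ref{lem:strange-dBC}, invoking \eqref{eq:T(k)-dBC} with $g=u_k\otimes v$, and using that constant matrices commute with $V_s$ together with $\Psi\AAA=\BB\Psi$. No gaps.
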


Using this explicit representation of the controllability map we now compute the exact reachability space for the control problem given in \eqref{eq:TE-net-db}.

\begin{corollary}\label{cor:Reach-dBC}
If $t\ge\min\{m,n\}=:l$ then the exact reachability space of the controlled flow with dynamic boundary conditions \eqref{eq:TE-net-db} is given by\footnote{Here we define $\rW^{0,p}[0,1]:=\Lpne$.}
\begin{align*}%\label{eq:Reach-dBC}
\bigl[\sRBCt\bigr]_1
&\subseteq
\l\{
\Psi\sum_{k=0}^{l}\l(u_{k}\otimes\AAA^k\, v\r):u_k\in\Wkpne\text{ for }0\le k\le l
\r\}\\
&=
\l\{
\sum_{k=0}^{l}\l(u_{k}\otimes\BB^k\Psi\, v\r):u_k\in\Wkpne\text{ for }0\le k\le l
\r\}.
\end{align*}
\end{corollary}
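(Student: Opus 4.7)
The plan is to reduce to integer time, invoke Corollary~\ref{cor:Bn-dBC}, expand the non-commutative polynomial $(\BB V_s+\delta_1)^{k-1}$, track the power of $\BB$ and the Sobolev regularity produced by each monomial, and finally apply the Cayley--Hamilton theorem to reduce powers beyond $l-1$.

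First, the map $t\mapsto\sRBCt$ is monotone increasing (zero-extending a control $u\in\rL^p([0,t],U)$ from the left produces a control on $[0,N]$ realizing the same state via $\sB_N^{\textrm{BC}}$), so since the right-hand side of the claim does not depend on $t$ it suffices to establish the inclusion for an integer time $N\ge\max\{l,2\}$.

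For such $N$ and $u\in\rL^p([0,N],\CC)$, Corollary~\ref{cor:Bn-dBC} gives
\[
[\sB_N^{\textrm{BC}} u]_1(s)=u_0\otimes\Psi v+\sum_{k=1}^{N-1}(\BB V_s+\delta_1)^{k-1}V_s(u_k\otimes\BB\Psi v),
\]
with $u_k\in\rL^p[0,1]$. Expanding each $(\BB V_s+\delta_1)^{k-1}$ as the sum over the $2^{k-1}$ ordered words in the non-commuting operators $\BB V_s$ and $\delta_1$, a word containing exactly $j$ occurrences of $\BB V_s$ (and thus $k-1-j$ of $\delta_1$), applied to $V_s u_k\otimes\BB\Psi v$, produces a term of the form $\phi(s)\otimes\BB^{j+1}\Psi v$. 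Composing from the innermost operator outward, if no $\delta_1$ appears then $\phi=V_s^{j+1}u_k\in\rW^{j+1,p}[0,1]$; if at least one $\delta_1$ appears, then the operators to its left act on the constant produced by evaluation at $1$, yielding a polynomial in $s$ times a scalar of the form $V_s^r u_k(1)$, which is smooth and in particular lies in $\rW^{j+1,p}[0,1]$. Collecting contributions by power of $\BB$ yields
\[
[\sB_N^{\textrm{BC}} u]_1=\sum_{k=0}^{N-1}\phi_k\otimes\BB^k\Psi v \quad\text{with}\quad \phi_k\in\rW^{k,p}[0,1].
\]

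Finally, applying Cayley--Hamilton to $\BB\in\MmC$ and $\AAA\in\MnC$ together with the intertwining $\BB^k\Psi=\Psi\AAA^k$ from~\eqref{eq:ABrel} expresses $\BB^k\Psi v$ for every $k\ge l=\min\{m,n\}$ as a fixed linear combination of $\Psi v,\BB\Psi v,\ldots,\BB^{l-1}\Psi v$; absorbing these contributions into the $\phi_k$ with $k\le l-1$ (which only increases their Sobolev regularity) produces the claimed representation with at most $l+1$ summands, and the equivalence of the two displayed sets in the statement follows from $\Psi\AAA=\BB\Psi$. The main technical obstacle is the bookkeeping in the non-commutative expansion and the regularity check for each coefficient; the essential simplification is that every $\delta_1$ truncates the $s$-dependence arising to its right, so only the $V_s$'s to the right of every $\delta_1$ limit the regularity of $\phi$, and their number is at most $j+1$.
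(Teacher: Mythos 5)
Your proof is correct and follows essentially the same route as the paper: both arguments expand the non-commutative polynomial $(\BB V_s+\delta_1)^{k-1}V_s$ from Corollary~\ref{cor:Bn-dBC} using the commutation relations among $\BB$, $V_s$ and $\delta_1$, and read off the power of $\BB$ and the Sobolev regularity from the number of integrations $V_{s_j}$ in each resulting monomial. You additionally make explicit two steps the paper leaves implicit --- the reduction from arbitrary $t\ge l$ to integer time via monotonicity of $t\mapsto\sRBCt$, and the Cayley--Hamilton reduction of the powers $\BB^k\Psi v$ with $k\ge l$ (the paper only treats $u\in\Lpnl$ directly) --- both of which are handled correctly.
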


\begin{proof}
The equality of the two sets on the right-hand-side follows immediately from \eqref{eq:ABrel}.
To show the inclusion in the second set we combine Corollaries~\ref{cor:Reach} and  \ref{cor:Bn-dBC}.
First observe, that for the operators $\BB$, $V_s$,  and $\delta_1$ we have
\[
\BB V_s f = V_s \BB f, \quad \BB \delta_1 f = \delta_1\BB f,\quad
\delta_1 V_s f = V_1f 
\]
for every $f\in\LpneCm$ while 
\[ \delta_1^k  f = \delta_1f =f(1)\quad\text{for } k\ge 1.\]
So, when expanding $(\BB V_s+\delta_1)^{k-1}V_s$ we can rearrange the terms to obtain expressions of the form
\[\alpha_i \BB^i V_{s_1}\cdots V_{s_{i+1}},\quad 0\le i\le k-1,\]
where $\alpha_i$ are scalar coefficients and $s_j\in\{s,1\}$, $1\le j\le i+1$. 
Next, for arbitrary $u\in\Lpne$ 
and $0\le k\le l$ we have 
\[ V_{s_1}\cdots V_{s_k} u \in \Wkpne, \quad s_j\in\{s,1\}, 1\le j\le k. \]
Combining these facts we obtain the desired result by considering \eqref{eq:edBC} for all $u\in\Lpnl$.
\end{proof}

From the previous Corollary we immediately obtain the following result which improves \cite[Thm.~3.10]{EKKNS:10} and shows that $ \bigl[\saRBCt\bigr]_1$ is constant for $t\ge\min\{m,n\}=:l$.

\begin{corollary}\label{cor:Reach-approx-dBC}
If $t\ge\min\{m,n\}=:l$ then the approximate controllability space of the controlled flow with dynamic boundary conditions \eqref{eq:TE-net-db} is given by
\begin{align*}
\bigl[\saRBCt\bigr]_1
&=\Lpne\otimes\lin\l\{\Psi v,\BB\,\Psi v,\ldots,\BB^{l-1}\Psi v\r\}\\
&=\Lpne\otimes\Psi\lin\l\{v,\AAA v,\ldots,\AAA^{l-1}v\r\}.
\end{align*}
\end{corollary}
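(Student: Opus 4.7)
The plan is to establish the two inclusions in the claimed equality; by monotonicity of $\saRBCt$ in $t$ (one can always zero out the control on an initial interval and time-shift), it suffices to prove the reverse inclusion at $t=l$.

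For the inclusion ``$\subseteq$'', Corollary~\ref{cor:Reach-dBC} yields for every $t\ge l$
\[
[\sRBCt]_1 \subseteq \Big\{\sum_{k=0}^{l} u_k \otimes \BB^k \Psi v : u_k \in \Wkpne\Big\}.
\]
Taking the closure in $\Lpne \otimes \CC^m$ and using density of $\Wkpne$ in $\Lpne$ gives $\Lpne \otimes \lin\{\BB^k \Psi v : 0 \le k \le l\}$. The Cayley--Hamilton theorem applied to whichever of $\AAA \in \MnC$ or $\BB \in \MmC$ has the smaller size, combined with $\BB^k \Psi = \Psi \AAA^k$ from \eqref{eq:ABrel}, shows $\BB^l \Psi v \in \lin\{\BB^k \Psi v : 0 \le k \le l-1\}$, so the span reduces to the claimed right-hand side.

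For the inclusion ``$\supseteq$'' at $t=l$, I would use the explicit formula of Corollary~\ref{cor:Bn-dBC} to realize each generator $f \otimes \BB^j \Psi v$ (for $f \in \Lpne$ and $0 \le j \le l-1$) as a limit in $[\sBl u]_1$. The case $j=0$ is immediate: take $u_0=f$ and $u_k=0$ for $k\ge 1$, producing $f \otimes \Psi v$ exactly. For $j\ge 1$, take $u_k=0$ for $k>j$ and pick $u_j$ so that the $j$-fold antiderivative $V_s^j u_j$ approximates $f$ in $\Lpne$-norm; this is feasible because $V_s^j(\Lpne)=\{g\in\rW^{j,p}[0,1]:g^{(i)}(0)=0,\ 0\le i<j\}$ contains $C_c^\infty((0,1])$, which is dense in $\Lpne$. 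The formula then produces the leading term $V_s^j u_j \otimes \BB^j \Psi v$ together with correction terms lying in $\lin\{\BB^k \Psi v : k<j\}$ that arise from the $\delta_1$-factors in the expansion of $(\BB V_s + \delta_1)^{j-1}$. An induction on $j$ cancels these corrections by adjusting $u_{j-1},\dots,u_0$, using sequences of controls concentrated near $s=0$ to approximately cancel constant-in-$s$ corrections (the boundary constraint $(V_s u_k)(0)=0$ prevents exact cancellation in general).

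The main technical obstacle is this inductive cancellation: the vanishing of $V_s u_k$ at $s=0$ obstructs the exact elimination of constant correction terms and forces one to work in the $\Lpne$-closure. A cleaner shortcut is to invoke \cite[Thm.~3.10]{EKKNS:10}, which already identifies $[\saRBC]_1$ with the target subspace; combining this with the upper bound above and the realization (via Corollary~\ref{cor:Bn-dBC}) that every generator of the subspace is hit already at time $t=l$ yields the stated equality for all $t\ge l$.
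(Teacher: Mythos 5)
Your argument is correct, and it actually supplies more than the paper does: the paper's entire ``proof'' of Corollary~\ref{cor:Reach-approx-dBC} is the sentence ``From the previous Corollary we immediately obtain\dots'', i.e.\ it records only what you call the upper bound (the inclusion of Corollary~\ref{cor:Reach-dBC}, closed up using $\Wkpne\subseteq\Lpne$ and the Cayley--Hamilton reduction of the $k=l$ term via $\BB^k\Psi=\Psi\AAA^k$ --- exactly your first paragraph). The reverse inclusion, which is the genuinely new content compared to \cite[Thm.~3.10]{EKKNS:10} (that theorem only identifies the arbitrary-time space $[\saRBC]_1$ and, combined with monotonicity in $t$, does \emph{not} yield equality at the finite time $t=l$), is left implicit in the paper; your second paragraph fills this gap correctly. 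The mechanism is right: in \eqref{eq:edBC} the $k=j$ term expands, since $\BB V_s$ and $\delta_1$ commute and $\delta_1^i=\delta_1$, into the leading term $V_s^j u_j\otimes\BB^j\Psi v$ plus corrections of the form $c_i\,s^i\otimes\BB^{i+1}\Psi v$ with $i+1<j$; density of $\Rg(V_s^j)$ in $\Lpne$ handles the leading term, and the corrections are absorbed by induction because $\overline{[\sRBCt]_1}$ is a closed linear subspace already containing $g\otimes\BB^k\Psi v$ for $k<j$ --- this is a cleaner way to phrase your ``adjusting $u_{j-1},\dots,u_0$'' step, and avoids any worry about the constraint $(V_su_k)(0)=0$. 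Two small remarks: your proposed ``shortcut'' via \cite[Thm.~3.10]{EKKNS:10} is not really shorter, since ``every generator is hit already at time $l$'' is precisely the inductive argument you just described; and both you and the paper silently identify $\bigl[\saRBCt\bigr]_1$ (projection of the closure) with $\overline{[\sRBCt]_1}$ (closure of the projection) --- harmless here since the second component lives in the finite-dimensional space $\Cn$, but worth a word if one wants to be pedantic.
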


\smallskip
In the same manner as before we also obtain the following result on positive controllability.

\begin{corollary}\label{cor:Reach-pos-dBC}
The approximate positive controllability space of the controlled flow with dynamic boundary conditions \eqref{eq:TE-net-db} is given by
\begin{align*}
\bigl[\saRBCp\bigr]_1
&=\Lpne\otimes\coc\l\{\BB^k\Psi v : k\in\NN_0\r\}\\
&=\Lpne\otimes\Psi\,\coc\l\{\AAA^k v: k\in\NN_0\r\}.
\end{align*}
\end{corollary}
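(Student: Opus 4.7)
The plan is to combine Proposition~\ref{prop:aRp-char}(iii) with the explicit computations from Lemma~\ref{lem:Diri-SGR-dBC}, Lemma~\ref{lem:strange-dBC}, and Corollary~\ref{cor:Bn-dBC}, mirroring Corollary~\ref{cor:pos-Reach-TE} (for the positive-cone structure) and Corollary~\ref{cor:Reach-approx-dBC} (for the projection onto the first coordinate and the algebraic identities \eqref{eq:ABrel}). As a preliminary step I would verify the positivity hypothesis of Proposition~\ref{prop:aRp-char}: by Lemma~\ref{lem:Diri-SGR-dBC}(ii) the semigroup $\Ttt$ is positive, since its blocks involve only $\BB$, $\Psi$, $\Fiwp$ and the positive Volterra operator $V_s$. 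For $\lambda$ such that $\lambda\el$ exceeds the spectral radius of $\AAA$, the Neumann series $R(\lambda\el,\AAA)=\sum_{k\ge 0}(\lambda\el)^{-k-1}\AAA^k$ is positive, so \eqref{Ql} together with $B=\Psi v\ge 0$ gives $\Bl\ge 0$. Proposition~\ref{prop:reach-pos} then yields $\saRBCtp\subset X^+$ and Proposition~\ref{prop:aRp-char}(iii) becomes applicable, producing
\[
\saRBCp=\coc\bigl\{T(s)\Bl c:s\ge 0,\ \lambda>w,\ c\in\RR_+\bigr\}.
\]

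For the inclusion $[\saRBCp]_1\subseteq\Lpne\otimes\coc\{\BB^k\Psi v:k\in\NN_0\}$, I would project the above representation onto the first coordinate and simplify $[T(s)\Bl c]_1$ by means of \eqref{eq:[T(t)]1} and \eqref{Ql}, using $\Psi\AAA=\BB\Psi$ and $\Psi R(\mu,\AAA)=R(\mu,\BB)\Psi$ from \eqref{eq:ABrel} to rewrite the vector part as a positive combination of the elements $\BB^k\Psi v=\Psi\AAA^k v$, while the $s$-dependence collects into a positive $\Lpne$-profile. For the reverse inclusion I would invoke Corollary~\ref{cor:Reach_pos} together with the explicit controllability map \eqref{eq:edBC}: the binomial expansion of $(\BB V_s+\delta_1)^{k-1}$ has positive integer coefficients, so for $u_k\in\Lpnep$ each summand of \eqref{eq:edBC} lies in $\Lpnep\otimes\RR_+\cdot\BB^j\Psi v$ for some $j\ge 0$; letting $n\to+\infty$ and using the density of positive step functions in $\Lpnep$ should exhaust $\Lpne\otimes\coc\{\BB^k\Psi v:k\ge 0\}$. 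The second equality in the statement then follows from \eqref{eq:ABrel} exactly as in the proof of Corollary~\ref{cor:Reach-approx-dBC}.

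The main obstacle I foresee is this last density step: one has to verify that the functions $V_{s_1}\cdots V_{s_j}u_k$ generated by expanding $(\BB V_s+\delta_1)^{k-1}$, together with positive superpositions across $k$ and $n$ and the $\Lpne$-closure, actually cover all of $\Lpnep$ against each fixed coefficient $\BB^j\Psi v$. In the non-positive analog (Corollary~\ref{cor:Reach-approx-dBC}) this was handled by Cayley--Hamilton reductions, but in the cone setting one cannot subtract and must produce the desired scalar profiles by positive superposition alone, which is the core technical point I would need to secure.
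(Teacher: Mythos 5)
Your overall route is the one the paper intends: the text gives no proof of this corollary beyond the phrase ``in the same manner as before'', and your plan --- verify positivity of $\Ttt$ and $\Bl$, invoke Propositions~\ref{prop:reach-pos} and~\ref{prop:aRp-char}, and feed the explicit formulas of Lemmas~\ref{lem:Diri-SGR-dBC} and~\ref{lem:strange-dBC} and Corollary~\ref{cor:Bn-dBC} into Corollary~\ref{cor:Reach_pos} --- is exactly that. Your positivity verification (positivity of the semigroup blocks, Neumann series for $R(\lambda\el,\AAA)$) and the inclusion ``$\subseteq$'' are sound.

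The obstacle you flag at the end is, however, not a technical point you merely failed to ``secure''; it is a genuine obstruction to the route you propose. In \eqref{eq:edBC} every occurrence of $\BB^{j}$ with $j\ge1$ is accompanied by at least one Volterra integration $V_s$ applied to the control (and the same holds for the generators $T(t)\Bl c$, $c\ge0$, in Proposition~\ref{prop:aRp-char}.(iii)). Hence for \emph{positive} controls the scalar profile multiplying $\BB^{j}\Psi v$, $j\ge1$, is always a nonnegative \emph{nondecreasing} function of $s$; positive superposition and norm closure preserve this monotonicity, since the nondecreasing functions form a closed convex cone in $\Lpne$. If $\Psi v,\BB\Psi v,\dots$ are linearly independent (the generic case), the component of any element of $\bigl[\saRBCp\bigr]_1$ along $\BB\Psi v$ is therefore nondecreasing, so for instance $\eins_{[0,1/2]}\otimes\BB\Psi v$ --- which lies in the right-hand side of the statement --- cannot be approximated. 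In the non-positive analogue (Corollary~\ref{cor:Reach-approx-dBC}) this causes no harm because $\{V_s g:g\in\Lpne\}$ is dense in $\Lpne$; but the closure of $\{V_s g:g\in\Lpnep\}$ is the cone of nonnegative nondecreasing functions, a proper subcone of $\Lpnep$, and without subtraction no Cayley--Hamilton rearrangement can repair this. So your reverse inclusion would require a genuinely different mechanism (or the statement a weaker formulation). Note also that, since $T(t)\ge0$ and $\Bl\ge0$ force $\bigl[\saRBCp\bigr]_1\subseteq\LpneRmp$, the factor $\Lpne$ appearing in the statement (which you reproduce as the target of your density argument) cannot be exhausted by positive controls in any case and should read $\Lpnep$, as in Corollary~\ref{cor:pos-Reach-TE}.
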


\section*{Conclusion}
Using a new characterization of admissible boundary control operators (see Proposition~\ref{prop:range-B1}) we are able to describe explicitly the  \emph{exact} reachability space of the abstract boundary control system  \SBC{}, cf. \eqref{ACPBC}. Moreover, this approach allows us also to determine the \emph{positive} reachability space obtained allowing only positive control functions. Our results generalize and improve the ones obtained in the former works \cite{BBEAM:13, EKNS:08, EKKNS:10} where only approximate controllability or positive controllability under quite restrictive assumptions are studied.

\bigskip
{\small

\noindent
\emph{Klaus-Jochen Engel}, University of L'Aquila, Department of Information Engineering, Computer Science and Mathematics, Via Vetoio, Coppito, I-67100 L'Aquila (AQ), Italy,\\
\texttt{klaus.engel@univaq.it}

\medskip\noindent
\emph{Marjeta Kramar Fijav\v{z}}, University of Ljubljana, Faculty of Civil and Geodetic Engineering, Jamova 2, SI-1000 Ljubljana, Slovenia /
Institute of Mathematics, Physics, and Mechanics,
Jadranska 19, SI-1000 Ljubljana, Slovenia,\\
\texttt{marjeta.kramar@fgg.uni-lj.si}

}
\end{document}